\pdfoutput=1
\documentclass[10pt,conference]{IEEEtran}
\IEEEoverridecommandlockouts

\usepackage{amsmath,amssymb,amsfonts,amsthm}
\usepackage{algorithmic}
\usepackage{graphicx}
\usepackage{textcomp}
\usepackage{xcolor}
\usepackage{macros}
\addbibresource{papers.bib}

\def\BibTeX{{\rm B\kern-.05em{\sc i\kern-.025em b}\kern-.08em
    T\kern-.1667em\lower.7ex\hbox{E}\kern-.125emX}}
\begin{document}

\title{A computational method for left-adjointness%
  \thanks{This study was supported by the French project EMERGENCE ReaLiSe and
    the French ANR project EPIQ (ANR-22-PETQ-0007).}%
}

\author{\IEEEauthorblockN{Simon Forest}
\IEEEauthorblockA{Aix--Marseille Univ, CNRS, LIS, Marseille, France\\
\texttt{simon.forest@normalesup.org}}
}


\maketitle

\begin{abstract}
  In this work, we investigate an effective method for showing that functors
between categories are left adjoints. The method applies to a large class of
categories, namely locally finitely presentable categories, which are ubiquitous
in practice and include standard examples like $\Set$, $\Grp$, \etcend Our
method relies on a known description of these categories as orthogonal
sub-classes of presheaf categories. The functors on which our method applies are
the ones that can be presented as particular profunctors, called \emph{Kan
models} in this context. The method for left-adjointness then relies on
computing that a particular criterion is satisfied. From this method, we also
derive another method for showing that a category is cartesian closed. As proofs
of concept and effectivity, we give a concrete implementation of the structures
and of the left-adjointness criterion in \textsc{OCaml} and apply it on several
examples.

\end{abstract}

\begin{IEEEkeywords}
category theory, presheaves, computational methods, proof assistant, algorithms.
\end{IEEEkeywords}

\section*{Introduction}

Over the years, category theory has proven to be an efficient tool to formalise
arguments and constructions in a mathematical setting. First developed in the
context of algebra and topology by Eilenberg and MacLane in the 1940s, it
quickly expanded to other fields than mathematics. In computer science notably,
category theory found several useful applications in programming languages,
semantics, logic, and other topics. For example: the notion of \emph{monad}
allows modeling the effects of programming languages~\cite{MOGGI199155}, and is
at the heart of the \textsc{Haskell} programming language~\cite{haskellws};
cartesian closed categories provide models of simply-typed
lambda-calculus~\cite{lambek1988introduction}; star-autonomous categories
provide models of linear logic~\cite{seely1987linear}; \etc




The approach of category theory can be considered \emph{abstract}, in the sense
that the emphasis is not put on the concrete definitions of the constructions
being considered but by the universal properties they satisfy. For example,
while the cartesian product of two sets $A$ and $B$ is often \emph{concretely}
defined in set theory as \emph{the} set $A \times B \defeq
\set{\set{\set{a},\set{a,b}} \mid a \in A, b \in B}$, in category theory, it is
considered as \emph{a} set $A \times B$, together with functions $l \co A \times
B \to A$ and $r \co A \times B \to B$, satisfying the following universal
property: for every set $X$ and every pair of function $f \co X \to A$ and $g
\co X \to B$, there is a unique function $h \co X \to A \times B$ such that $f =
l \circ h$ and $g = r \circ h$. This categorical definition of the cartesian
product is just an instance of the definition of the more general notion of
\emph{product} in the category $\Set$ of sets and functions, but can be
expressed in any category. Other standard categorical constructions include:
(co)limits, exponentials, left and right adjoints, (co)ends, left and right Kan
extensions, \etcend
Using this abstract approach, one can go a long way using the toolbox of
category theory to make abstract constructions or prove properties using generic
arguments, without actually considering concrete definitions of the objects
being considered.

The abstraction of category theory raises the question of computation: despite
most constructions being abstractly defined, is it possible to compute them or
reason about them computationally? More generally, to which extent category
theory can be made computable?

In order to computationally manipulate or reason about some mathematical
objects, one first need to be able to represent them, or \emph{encode} them, in
a form that a program can understand, typically as instances of datatypes. Once
such an encoding is given, one can start to wonder what can be computed from it.
For example, one can easily encode finite sets and functions between them using
standard computational structures. With such an encoding, one can compute
several categorical constructions on finite sets, like (co)products, pullbacks,
pushouts, and more generally, any finite limit or colimit. But this encoding
only works for the category $\Set$ and its objects that are finite sets, and
does not directly extend either for infinite sets or other categories, which can
seem limited.

In order to consider other kind of computations in a categorical setting, an
idea is to change the encoding for one similar to the ones used in the context
of algebraic structures. Indeed, computations on groups, rings, \etc often rely
on \emph{presentations} of such structures, which specify a set of generators
and a set of equations satisfied by these generators. When such presentations
are \emph{finite}, one can encode them and give them as input to a computer.
Despite the finiteness, a lot of structures of interest can be described through
finite presentations, and several computations on these structures can be
carried out through these presentations. In group theory for example, one can
consider the word problem on a group presented by a finite presentation; one can
also use the Todd–Coxeter algorithm to enumerate the cosets of a group described
by a finite presentation~\cite{ToddCoxeter1936}; \etcend

One can similarly consider categories described by finite presentations and
investigate the computations that can be done through this encoding. Several
earlier works have taken this approach with successful results. But the
categories described this way are very specific: they have a finite number of
objects (specified by the finite presentation) and a countable number of
morphisms, and are defined up to isomorphism. Standard examples of categories,
like $\Set$, the category of groups $\Grp$, \etc, on the other hand, have an
infinite number of objects and morphisms, which do not even form a set, and are
morally defined up to equivalence of categories. So that these standard examples
cannot be described with finite presentations, contrary to the situation of
other algebraic structures, where most examples of interest are.

In order to encode these giant structures, one can use a more creative notion of
presentation. In the same way that finitely presentable categories form a
particular class of categories that can be presented by a finite presentation as
discussed above, there is a class of large well-behaved categories, called
\emph{locally finitely presentable (l.f.p.) categories}, that can be fully
described by a smaller data than its whole classes of objects and morphisms.
\Lfp categories include a lot of very common categories, whose object can be
described as models of some sort of equational theories called \emph{essentially
  algebraic theories}~\cite{freyd1972aspects,adamek1994locally}: the categories
of groups, rings, graphs, small categories, \etc are classical examples. This
notion of l.f.p. categories relies on the former notion of finite presentation
that we discussed: it requires that every object of these categories can be
expressed as a particular colimits of finitely presented objects. This way,
every object is characterised by the sets of morphisms from finitely presented
objects to it. By some theoretical developments, it implies that objects of an
l.f.p. category $\cC$ can be modeled as particular presheaves over some category
$C$, satisfying some \emph{orthogonality conditions}. When $C$ is finite, it can
be encoded and, under similar finiteness conditions, the orthogonality
conditions can be encoded as well, so that the category $\cC$ can be encoded, in
a sense that there is an inhabitant of a datatype that describes $\cC$ up to
equivalence of categories. Moreover, we are able to computationally represent
the objects of $\cC$ that are equivalent to finite presheaves.

Now it might appear that this new kind of encoding only allows one to generalise
the computations that were possible on finite sets, like finite limits and
colimits, to more general \lfp categories and their objects that can be
described by finite presheaves. While this is still valuable, it still feels
limited compared to more general constructions or problems of category theory.
Indeed, category theory is often concerned about \emph{global} properties of
categories (does this category have limits and colimits of a particular shape?
Is it cartesian closed? \etc), or of functors between categories (is this
functor preserving limits or colimits of a particular shape? Does it have an
adjoint? \etc). While the above encoding is able to encode classical (large)
examples of categories, it still seems that we are not able to do much more than
computing finite limits and colimits, like for finite sets. Also, we still miss
a way to encode another important notion of category theory, namely the one of
functor between categories.

The goal of this paper is to push the limitations of this encoding, in order to
extend the scope of what can be considered computable in category theory. For
this purpose, based on the discussed encoding of \lfp categories, we show that
functors between such categories can be encoded and that the problem of left
adjointness, that is, whether a functor is a left adjoint or not, can be given a
computational treatment.

\paragraph*{Outline and contributions}

In \Cref{sec:computational-model-categories}, we recall some basic definitions
about \lfp categories and in particular discuss how they can be described as
full sub-categories of orthogonal objects of presheaf categories, through a
notion that we call \emph{presheaf model}. In \Cref{sec:modeling-functors}, we
show how one can use these presheaf models to represent functors between \lfp
categories, using another structure called \emph{Kan model}. In
\Cref{sec:la-through-modeling}, we consider the problem of left adjointness of
functors between two \lfp categories and give a sufficient criterion based on
presheaf and Kan models. Checking this criterion will rely on a construction
called \emph{reflection}, that we will study in detail to investigate its
computability. While reflection is not always fully computable, finite
approximations are often enough to decide whether the criterion for left
adjointness is satisfied. The computations of these approximations is organised
into some sort of game called \emph{game of reflection}, that one needs to play
to show the criterion. The exploration of the moves that can be played to reach
a winning configuration can be delegated to a computer, resulting in a
computational method to show left adjointness.

As a proof of concept, we give a first implementation in \ocaml of the method,
discussed in \Cref{sec:implementation}, and apply it to several examples.

\paragraph*{Related works} This work can be considered to belong to the greater
field of \emph{computational category theory}, of which several earlier works
are relevant to this current work. Notably, the different
works~\cite{carmody1991computing,CARMODY1995459,Walters_1992,bush2003computing}
on the computation of certain left Kan extensions, first motivated as a
generalisation of the Todd--Coxeter procedure coming from computational group
theory. They were applied, together with other computational aspects of category
theory, in the context of databases in different
works~\cite{spivak2015relational,schultz2016algebraic,schultz2017algebraic,meyers2022fast}.
A more recent line of works discussed the computational aspects of category
theory in the context of homological
algebra~\cite{posur2021constructive,Posur2019MethodsOC}. Several tools were also
developed to deal with different aspects of category
theory~\cite{vicary2018globular,heidemann10homotopy,finstermimram17,lynch2024GAT},
notably the \texttt{AlgebraicJulia} framework and its \texttt{GATlab}
library~\cite{lynch2024GAT}, which allows one to consider categories of models
of \emph{Generalised Algebraic Theories}~\cite{cartmell1986generalised}, which
are strongly connected to the description of \lfp categories that we consider in
this work.


\section{L.f.p. categories, expressed with presheaves}
\label{sec:computational-model-categories}

In this section, we recall the notion of \emph{locally finitely presentable}
(abbreviated \emph{l.f.p.}) categories, which defines a large class of
well-behaved categories, suited for the development of methods that are amenable
to computations. These categories are often described as categories of models of
\emph{essentially algebraic theories}, making them ubiquitous in practice. The
methods we will develop will rely on a particular characterisation of l.f.p.
categories as orthogonal sub-classes of presheaf categories that we will recall.
We moreover illustrate this characterisation on several examples. We defer the
computational treatment of this description of categories to
\Cref{sec:implementation}, focusing here on the mathematical modeling in greater
generality.

\subsection{Finite presentability}

The theory of \lfp categories is centered on the notion of finitely
presentability, itself derived from the notion of directed colimit. See
\cite{adamek1994locally} for details.
\begin{definition}
  A partial order $(D,\le)$ is \index{directed}\emph{directed} when $D \neq
  \emptyset$ and for all $x,y \in D$, there exists~$z \in D$ such that~${x \le
    z}$ and~${y \le z}$. A small category~$I$ is called \emph{directed} when it
  is isomorphic to a directed partial order~$(D,\le)$. A \emph{directed colimit}
  in a category $\cC$ is a colimit taken on a diagram $d \co I \to \cC$ where
  $I$ is directed.
\end{definition}

\begin{definition}
  An object $P$ of a category $\cC$ is \emph{finitely presentable} (abbreviated
  \fp) when its hom-functor $\Hom(P,-) \co \cC \to \Set$ commutes with directed
  colimits.
\end{definition}
Concretely, it means that for every directed colimit $(p_i \co d(i) \to L)_{i
  \in \Ob(I)}$ on a diagram $d \co I \to \cC$ and every morphism $f \co P \to
L$, there is $i \in \Ob(I)$ and $u \co P \to d(i)$ such that $f = p_i \circ u$,
and this pair $(i,u)$ is \emph{essentially unique}: given another such pair
$(i',u')$, there is $j \in \Ob(I)$ with $i,i' \le j$ such that $d(i \le j) \circ
u = d(i' \le j) \circ u'$.
\begin{example}
  When $\cC = \Set$, the \fp objects are the finite sets. In the case of $\cC =
  \Grp$, the category of groups, the \fp objects are the groups which admits a
  presentation through a finite number of generators and a finite number of
  equations. More generally, the \fp objects in other categories are often the
  objects which can be presented using a notion of finite presentation.
\end{example}

\subsection{\Lfp categories}

\begin{definition}[{\cite[Definition 1.9]{adamek1994locally}}]
  \label{def:lfp-categories}
  A locally small category $\cC$ is \emph{locally finitely presentable} (or
  \lfp) when
  \begin{enumerate}[(i)]
  \item it has all small colimits;
  \item every object $X \in \cC$ is a directed colimit of finitely presentable objects;
  \item the full subcategory over the finitely presentable objects of $\cC$ is
    \emph{essentially small}, that is, equivalent to a small category.
  \end{enumerate}
\end{definition}

\begin{example}
  We can already observe that $\Set$ is an \lfp category: indeed, every set is a
  directed union (in particular, a directed colimit) of its finite subsets, and
  the full sub-category of finite subsets is essentially small.
\end{example}
\Cref{def:lfp-categories} is a theoretical but rather abstract definition of
\lfp categories, that hides their relevance in practical situations. One usually
prefer to recognise \lfp categories as models of \emph{essentially algebraic
  theories}~\cite{freyd1972aspects,adamek1994locally}. The latter are an
extension of standard algebraic theories for algebraic structures that allows
for operations with restricted domains. See \Cref{sec:eat} for details.
\begin{example}
  The categories of $\Mon$ of monoids, $\Grp$ of groups, $\Grph$ of graphs,
  $\Cat$ of small categories, \etc are all categories of models of essentially
  algebraic theories. For example, $\Mon$ is the category of models of a theory
  with one sort $M$, a constant $e$ and an operation $m \co M\times M \to M$
  satisfying the equations $m(e,x) = x$, $m(x,e) = x$ and $m(m(x,y),z) =
  m(x,m(y,z))$ for any $x,y,z$ of $M$. Similarly for $\Grp$, $\Grph$ and $\Cat$.
  We deduce that they are all \lfp categories by a well-known characterisation
  (see \Cref{thm:lfp-cats-as-eat-models}).
\end{example}

\subsection{Presheaves on a category}

The methods we develop in this article rely on a description of \lfp categories
as particular sub-classes of presheaf categories. Presheaves are particularly
relevant for the computational methods we investigate in this article, since
they are rather easy to encode. We recall here some basic definitions and facts
about presheaves and presheaf categories. Let $C$ be a small category.
\begin{definition}
  A \emph{presheaf} on $C$ is a functor $X \co \catop C \to \Set$. A
  \emph{morphism} between two presheaves $X$ and $Y$ is a natural transformation
  $\alpha \co X \To Y$. We write $\ps C$ for the \emph{presheaf category} of
  presheaves on $C$ and presheaf morphisms.
\end{definition}
A lot of naturally occurring structures can be described as presheaves on an
adequate category:
\begin{example}
  Considering the category $G$ with two objects $N$ and $A$ and two non-trivial
  morphisms $\csts,\cstt \co N \to A$, we can consider the presheaf category
  $\ps G$, which can be equivalently described as the category of directed
  graphs and their morphisms.
\end{example}
\noindent Presheaf categories have a lot of good properties, notably:
\begin{proposition}
  \label{prop:pscat-props}
  Given a small category $C$, $\ps C$ is \lfp, complete, cocomplete and
  cartesian closed. Moreover, the colimits and limits are pointwisely computed
  as in $\Set$: given a diagram of presheaves $X_{(-)} \co I \to \ps C$ and $c
  \in C$, we have $(\colim_{i \in I} X_i)(c) \cong \colim_{i \in I} (X_i(c))$
  (and similarly for limits).
\end{proposition}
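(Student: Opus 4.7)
The plan is to handle the four properties in a sequence where the pointwise formula does most of the work.

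I would start by proving the pointwise formula for colimits (and dually for limits). Given a diagram $X_{(-)} \co I \to \ps C$, define a candidate presheaf $L$ by $L(c) \defeq \colim_{i \in I} X_i(c)$ at each $c \in C$; for a morphism $f \co c' \to c$ of $C$, the family $(X_i(f) \co X_i(c) \to X_i(c'))_{i \in I}$ is natural in $i$, hence induces a unique arrow $L(f) \co L(c) \to L(c')$ in $\Set$ by the universal property of the colimit. Functoriality of $L$ is a direct consequence of that uniqueness. The cocone injections $X_i \To L$ are the pointwise colimit injections, and their naturality in $c$ holds by construction. The universal property of $L$ reduces pointwise to the universal property in $\Set$. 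The limit side is dual. From this, completeness and cocompleteness of $\ps C$ are immediate since $\Set$ is (co)complete.

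For cartesian closure, I would define the exponential by $Y^X(c) \defeq \Hom_{\ps C}(X \times \Hom_C(-,c), Y)$, with action on morphisms of $C$ given by precomposition, and then verify the adjunction $\Hom_{\ps C}(Z \times X, Y) \cong \Hom_{\ps C}(Z, Y^X)$ natural in $Z$. The standard route goes through the Yoneda lemma: giving a morphism $Z \To Y^X$ amounts to giving, for each $c \in C$ and each $z \in Z(c)$, a morphism $X \times \Hom_C(-,c) \To Y$ natural in $c$ and $z$, and these data assemble into a morphism $Z \times X \To Y$, providing the required bijection.

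For the \lfp property, the argument combines three classical ingredients. First, every representable $\Hom_C(-,c)$ is finitely presentable in $\ps C$: by the Yoneda lemma, $\Hom_{\ps C}(\Hom_C(-,c), -)$ is naturally isomorphic to evaluation at $c$, and the pointwise formula implies that evaluation preserves all colimits (a fortiori, directed ones). Second, by the density theorem, every presheaf $X$ is the colimit of its canonical diagram of representables indexed by its category of elements. Third, this canonical diagram can be organised as the directed colimit of its finite sub-diagrams; taking the colimits of these sub-diagrams exhibits $X$ as a directed colimit of presheaves which are finite colimits of representables, hence finitely presentable (using that finite colimits of finitely presentable objects are finitely presentable). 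The essential smallness of the full subcategory of finitely presentable presheaves follows from $C$ being small, which forces the isomorphism classes of finite colimits of representables to form a set.

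The main obstacle is the \lfp part, specifically the step reorganising the canonical colimit of representables as a directed colimit of finitely presentable presheaves and checking the stability of finite presentability under finite colimits; the remaining items are largely direct applications of the pointwise formula or of Yoneda.
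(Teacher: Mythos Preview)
Your argument is correct and follows the standard textbook route for each item. However, the paper does not actually prove this proposition: it is stated as a well-known fact about presheaf categories and immediately followed by the definition of the Yoneda functor, with no proof or reference given. So there is nothing to compare against beyond noting that the paper treats \Cref{prop:pscat-props} as folklore, whereas you have supplied the standard proofs.

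One small point worth tightening in your \lfp argument: for essential smallness you assert that the isomorphism classes of finite colimits of representables form a set, which is true, but this only bounds the finitely presentable presheaves once you know that every finitely presentable presheaf is (a retract of) such a finite colimit. The usual way to close this gap is precisely the step you already set up: writing an arbitrary finitely presentable $P$ as a directed colimit of finite colimits of representables and using finite presentability of $P$ to factor $\unit P$ through one stage, exhibiting $P$ as a retract. With that sentence added, your sketch is complete.
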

The category $C$ can be mapped into the category $\ps C$ through the Yoneda
functor, which happens to be fully faithful:
\begin{definition}
  The \emph{Yoneda functor} is the functor $\yoneda_C \co C \to \ps C$ defined
  by mapping $c \in \catob C$ to the presheaf $C(-,c) \co \cop C \to \Set$.
\end{definition}
Given $c \in \Ob(C)$ and $X \in \Ob(\ps C)$, a morphism $f \co \yoneda_C(c) \to
X$ can be mapped to an element of $X(c)$, by looking at the image of $\unit c$
by $f_c$. The family of these mappings for $c \in \Ob(C)$ defines a natural
isomorphism, as stated the Yoneda lemma:
\begin{proposition}[Yoneda lemma]
  The mapping just described induces a bijection
  \[
    \ps C(\yoneda_C(c),X) \qp\cong X(c) \qqp\in \Set
  \]
  which is natural in $c \in \Ob(C)$.
\end{proposition}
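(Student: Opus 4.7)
The plan is to establish the bijection explicitly by exhibiting its inverse, and then verify naturality in $c$. First, I would name the forward map $\Phi_{c,X} \co \ps C(\yoneda_C(c), X) \to X(c)$, defined as in the statement by $\Phi_{c,X}(f) = f_c(\unit c)$. The candidate inverse $\Psi_{c,X} \co X(c) \to \ps C(\yoneda_C(c), X)$ is forced by the requirement that we recover $f_c(\unit c)$: given $x \in X(c)$, one has no choice but to define, for each $c' \in \Ob(C)$ and each $g \in C(c',c) = \yoneda_C(c)(c')$,
\[
  \Psi_{c,X}(x)_{c'}(g) \qdefeq X(g)(x) \qqp\in X(c').
\]

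Next, I would carry out three routine verifications. (a) The family $(\Psi_{c,X}(x)_{c'})_{c' \in \Ob(C)}$ is natural in $c'$: this follows immediately from the functoriality $X(g \circ h) = X(h) \circ X(g)$ of the presheaf $X$. (b) $\Phi_{c,X} \circ \Psi_{c,X} = \id_{X(c)}$: unwinding the definitions gives $\Psi_{c,X}(x)_{c}(\unit c) = X(\unit c)(x) = x$, again by functoriality. (c) $\Psi_{c,X} \circ \Phi_{c,X} = \id$: for $f \co \yoneda_C(c) \to X$, for $c' \in \Ob(C)$ and $g \in C(c',c)$, one computes
\[
  \Psi_{c,X}(\Phi_{c,X}(f))_{c'}(g) = X(g)(f_c(\unit c)) = f_{c'}(\yoneda_C(c)(g)(\unit c)) = f_{c'}(\unit c \circ g) = f_{c'}(g),
\]
where the middle equality is the naturality of $f$ applied to $g \co c' \to c$, and the rest is unwinding the definition of $\yoneda_C(c)$ on morphisms.

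Finally, naturality of $\Phi$ in $c$ means that for every morphism $k \co c \to c_1$, the square induced by precomposing with $\yoneda_C(k)$ on the left and applying $X(k)$ on the right commutes. I would check this directly: for $f \co \yoneda_C(c_1) \to X$, the element $\Phi_{c,X}(f \circ \yoneda_C(k)) = (f \circ \yoneda_C(k))_c(\unit c) = f_c(k)$, while on the other route $X(k)(\Phi_{c_1,X}(f)) = X(k)(f_{c_1}(\unit{c_1})) = f_c(\unit{c_1} \circ k) = f_c(k)$ by naturality of $f$.

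None of the steps is hard in itself; the main obstacle is purely bookkeeping, namely keeping the variances straight ($X$ contravariant, $\yoneda_C(c)$ contravariant in its argument) and knowing which naturality square or functoriality equation to invoke at each step. Once the inverse $\Psi$ is written down, every verification reduces mechanically either to functoriality of $X$ or to naturality of $f$.
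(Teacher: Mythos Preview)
Your argument is correct and is the standard textbook proof of the Yoneda lemma. The paper, however, does not prove this proposition at all: it is stated without proof as a classical result (as is customary). So there is no approach to compare against; your explicit construction of the inverse and the three routine verifications are exactly what one would expect, and nothing is missing.
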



\subsection{\Lfp categories as orthogonal sub-categories}

While every presheaf category is an \lfp category, the converse is not true.
However, by a known characterisation~\cite{adamek1994locally}, every \lfp
category can be identified as a full sub-category of a presheaf category $\ps
C$. The presheaves that belong to this sub-category are the ones that are
\emph{orthogonal} to a set of \emph{orthogonality conditions}, the latter being
morphisms of $\ps C$. These conditions will enable additional structures and
equations on these orthogonal presheaves, making them expressive enough to model
objects of other \lfp categories. Since this notion of orthogonality is not at
all specific to presheaf categories, we introduce in a more general setting.
\begin{definition}[{\cite[Definition 1.32]{adamek1994locally}}]
  Let $\cC$ be a category. Let $X$ be an object of $\cC$ and $g \co A \to B$ be
  a morphism of $\cC$. We say that $X$ is \emph{orthogonal} to $g$, denoted $X
  \ortho g$, when we have the following unique lifting property: for all $f \co
  A \to X$, there exists a unique $h \co B \to X$ such that $f = h \circ g$.
  Graphically,
  \begin{equation}
    \label{eq:orthogonal-prop}
    \begin{tikzcd}[cramped,rsbo=2.5em]
      A
      \ar[rr,"g"]
      \ar[rd,"\forall f"']
      &&
      B
      \ar[ld,dashed,"\exists ! h"]
      \\
      &
      X
    \end{tikzcd}
    \zbox.
  \end{equation}
  Given a set $O$ of morphisms of $\cC$, we write $\orthocat{(\cC,O)}$, or even
  $\orthocat O$, for the full subcategory of $\cC$ whose objects are the $X \in
  \cC$ which are orthogonal to every $g \in O$.
\end{definition}
We can already mention the adjunction between $\cC$ and its orthogonal sub-category,
and whose construction will play an important role for showing the correctness
of the criterion of \Cref{sec:la-through-modeling}:
\begin{theorem}[{\cite[Construction 1.37]{adamek1994locally}}]
  \label{thm:ortho-adj}
  Given a cocomplete category $\cC$ and a set $O$ of morphisms $g \co A \to B$
  of $\cC$ such that $A$ and $B$ are finitely presentable, the category
  $\orthocat O$ is a reflective subcategory of $\cC$, that is, the embedding
  functor $\orthocatemb^{(\cC,O)} \co \orthocat O \to \cC$ (abbreviated
  $\orthocatemb$) is part of an adjunction
  \[
    \begin{tikzcd}[column sep=large]
      \orthocat{O}
      \ar[r,shift right=0.9em,"\orthocatemb"']
      \ar[r,phantom,"\perp"]
      &
      \mcal C
      \ar[l,shift right=0.9em,"\orthorefl"']
    \end{tikzcd}
  \]
  for some \emph{reflection functor} $\orthorefl^{(\cC,O)}$, for some unit
  $\eta^{(\cC,O)} \co \catunit{\cC} \To \orthocatemb\orthorefl$ and some counit
  $\eps^{(\cC,O)} \co \orthorefl\orthocatemb \To \catunit{\orthocat O}$ (often
  abbreviated $\orthorefl$, $\eta$ and $\eps$ as well).
\end{theorem}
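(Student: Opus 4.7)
The plan is to apply a small-object-argument style construction. For each $X \in \cC$, I would build the reflection $\orthorefl(X)$ together with the unit component $\eta_X \co X \to \orthorefl(X)$ by an iterative process that successively repairs failures of the orthogonality property against every $g \in O$.

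Concretely, I would define an endofunctor $T$ on the category of objects-under-$X$ which simultaneously (a) \emph{adjoins missing lifts}, by forming, for a given object $Y$ with a map $X \to Y$, the pushout of the coproduct $\coprod g$ (ranging over all triples $(g \co A \to B, f \co A \to Y)$ with $g \in O$) along the matching coproduct $\coprod f$; and (b) \emph{identifies parallel lifts}, by coequalising each pair $(h_1, h_2 \co B \to Y)$ satisfying $h_1 \circ g = h_2 \circ g$. Both constructions exist since $\cC$ is cocomplete. Iterating $T$ yields a chain $X = X_0 \to X_1 \to X_2 \to \cdots$ under $X$, and we set $\orthorefl(X) \defeq \colim_{n < \omega} X_n$ with $\eta_X$ the induced structure map.

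The crucial step is to show $\orthorefl(X) \in \orthocat O$, which is precisely where finite presentability of $A$ and $B$ enters. Given $g \co A \to B$ in $O$ and $f \co A \to \orthorefl(X)$, finite presentability of $A$ implies that $f$ factors through some $X_n$; then the lift-adjoining step in $T(X_n)$ provides a morphism $B \to X_{n+1}$ whose composition with $X_{n+1} \to \orthorefl(X)$ is a lift of $f$ along $g$. For uniqueness, two such lifts $h_1, h_2 \co B \to \orthorefl(X)$ both factor through some common stage $X_m$ by finite presentability of $B$, after which the coequalising step at stage $m$ identifies them in $X_{m+1}$, hence in $\orthorefl(X)$. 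The universal property of $\eta_X$ against any orthogonal target $Y$ is then verified by inductively extending a morphism $X \to Y$ to coherent morphisms $X_n \to Y$ using the existence-and-uniqueness lifting property of $Y$ at each stage, and passing to the colimit to obtain the unique factoring map $\orthorefl(X) \to Y$.

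I expect the main obstacle to lie in arranging the one-step construction so that both adjunction and coequalisation are packaged coherently and functorially in $X$, and in rigorously exploiting finite presentability to show that $\omega$ iterations suffice to reach a fixed point of the orthogonality conditions. Without the hypothesis that $A$ and $B$ are finitely presentable, the chain would instead have to be extended transfinitely up to some regular cardinal $\lambda$ at which the hom-functors $\Hom(A,-)$ and $\Hom(B,-)$ commute with $\lambda$-directed colimits; the assumption in the statement is exactly what trivialises this issue at $\omega$.
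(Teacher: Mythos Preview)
Your proposal is correct and follows essentially the same small-object-argument construction that the paper (following \cite[Construction 1.37]{adamek1994locally}) describes in its later section on the reflection construction: iterate a one-step ``adjoin missing lifts via pushouts, collapse non-unique lifts via coequalisers'' operation, take the $\omega$-colimit, and invoke finite presentability of the domains and codomains of $O$ to ensure $\omega$ many stages suffice. The only cosmetic difference is that the paper restricts the pushout step to those $f \co A \to X_i$ that genuinely lack a lift, whereas you push out along all $f$; either variant works.
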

\begin{remark}\label{rem:orthocounit-isom}
  In particular, since $\orthocatemb$ is fully faithful ($\orthocat O$ is a
  full subcategory of $\cC$), the counit $\orthocounit\co \orthorefl\orthocatemb
  \To \catunit{\orthocat O}$ is an isomorphism.
\end{remark}
\Lfp categories are then characterised the following way using orthogonality:
\begin{theorem}[{\cite[Theorem 1.46]{adamek1994locally}}]
  \label{def:lfp-categories-as-ps-ortho-cats}
  Let $\cC$ be a category. $\cC$ is \lfp iff $\cC$ is equivalent to a category
  $\orthocat{(\ps C, O)}$ for some small category $C$ and some set $O$ of
  morphisms $g \co A \to B \in \ps C$ such that both $A$ and $B$ are finitely
  presentable.
\end{theorem}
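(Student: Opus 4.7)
The plan is to treat the two directions separately: the backward direction transfers \lfp-ness along the reflective adjunction of \Cref{thm:ortho-adj}, while the forward direction produces the presheaf presentation through a nerve functor.

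For the backward direction, assume $\cC \simeq \orthocat{(\ps C, O)}$ with $A, B$ \fp for every $g \co A \to B$ in $O$. Since $\ps C$ is \lfp by \Cref{prop:pscat-props} and $\orthocat O$ is reflective in $\ps C$ by \Cref{thm:ortho-adj}, the core lemma is that $\orthocat O$ is closed under directed colimits in $\ps C$: given a directed diagram $d \co I \to \orthocat O$ and $g \co A \to B$ in $O$, any map $A \to \colim_I \orthocatemb d$ factors through some $\orthocatemb d(i)$ by \fp-ness of $A$ and extends uniquely across $g$ by orthogonality of $d(i)$, with uniqueness at the colimit level coming from \fp-ness of $B$. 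Consequently $\orthorefl$ preserves directed colimits. The three clauses of \Cref{def:lfp-categories} for $\orthocat O$ then follow: small colimits are obtained by reflecting those of $\ps C$; the \fp objects of $\orthocat O$ are, up to iso, the reflections $\orthorefl(P)$ of \fp presheaves; and every object of $\orthocat O$ is a directed colimit of such reflections, inherited from its image in $\ps C$.

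For the forward direction, let $\cC$ be \lfp. I would take $C$ to be a small skeleton of the full sub-category $\cC_\fp$ of \fp objects, available by clause (iii) of \Cref{def:lfp-categories}, and consider the nerve functor $N \co \cC \to \ps C$ given by $N(X)(P) = \cC(P, X)$. Full faithfulness of $N$ is a density argument using clause (ii): every $X$ is a directed colimit of \fp objects mapping into it, which is enough to reconstruct hom-sets $\cC(X, Y)$ from the data of $N(X)$ and $N(Y)$. For the essential image, note that \fp objects in $\cC$ are closed under finite colimits, so that for every finite diagram $d \co I \to C$ with colimit $L \in C$, the comparison morphism $g_d \co \colim_I \yoneda_C \circ d \to \yoneda_C(L)$ in $\ps C$ has \fp domain (a finite colimit of representables) and \fp codomain (a representable). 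Taking $O$ to be the set of all such $g_d$ over a small family of finite diagrams in $C$, a Yoneda calculation shows that $F \in \ps C$ is orthogonal to every $g_d$ iff $F$ sends the chosen finite colimits of $C$ to limits in $\Set$, which is exactly the condition for $F$ to be isomorphic to some $N(X)$; this yields the desired equivalence.

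The main obstacle is the essential-image characterization of the forward direction: the easy implication shows $N(X) \ortho g_d$ directly from the fact that $\cC(-, X)$ sends colimits to limits, but the converse requires, given an orthogonal $F$, constructing an $X \in \cC$ with $N(X) \cong F$. The construction writes $F$ as a canonical directed colimit of representables, transports this diagram to $\cC_\fp$ via full faithfulness of Yoneda, and takes its colimit in $\cC$; the orthogonality conditions are exactly what force this colimit to have the correct values under $N$. Secondary care is needed to package the $g_d$ into a genuine set rather than a proper class, which reduces to smallness of $C$ plus a choice of representatives for isomorphism classes of finite diagrams.
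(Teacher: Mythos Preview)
The paper does not supply its own proof of this statement: it is quoted verbatim from \cite[Theorem~1.46]{adamek1994locally} and used as a black box. Your sketch is essentially the standard argument found there, so there is nothing to compare against in the present paper; the approach you outline (reflective subcategory closed under directed colimits for the backward direction, restricted Yoneda/nerve into $\ps{\cC_{\fp}}$ with orthogonality conditions encoding finite-colimit preservation for the forward direction) is exactly the one in Ad\'amek--Rosick\'y.

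One small imprecision worth flagging in your backward direction: you write that ``the \fp objects of $\orthocat O$ are, up to iso, the reflections $\orthorefl(P)$ of \fp presheaves''. What the argument actually yields is that every \fp object of $\orthocat O$ is a \emph{retract} of some $\orthorefl(P)$ with $P$ \fp in $\ps C$ (from the factorisation of $\unit X$ through a stage of the directed colimit expressing $X$); it need not itself be of the form $\orthorefl(P)$, since $\orthocatemb X$ is generally not \fp in $\ps C$. This still suffices for clause~(iii) of \Cref{def:lfp-categories}, because retracts of objects drawn from an essentially small family again form an essentially small family in a locally small category. With that adjustment the backward direction is fine, and your forward direction---identifying the essential image of the nerve with presheaves sending finite colimits in $C$ to limits, then reconstructing $X$ as the filtered colimit in $\cC$ indexed by the category of elements---is the standard route and is correct as sketched.
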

The above property justifies the following definitions:
\begin{definition}
  A \emph{presheaf model} is a pair $(C,O)$ where $C$ is a small category and
  $O$ is a set of morphisms $g \co A \to B$ of $\ps C$ such that both $A$ and
  $B$ are finitely presentable presheaves.
  A (necessarily \lfp) category $\cC$ is \emph{modeled} by the presheaf model
  $(C,O)$ when $\cC \simeq \orthocat{(C,O)}$.
\end{definition}

\subsection{Examples}

Let's now consider some examples of \lfp categories in the sense of
\Cref{def:lfp-categories-as-ps-ortho-cats}.
\begin{example}
  \label{ex:pm-set}
  The category $\Set$ is modeled by the presheaf model $(\psmcatset,\psmoset) =
  (\termcat,\emptyset)$ where $\termcat$ is the terminal category, made of one
  object $\ast \in \termcat$ and no non-trivial morphisms.
\end{example}

\begin{example}
  \label{ex:pm-set-times-set}
  While $\Set \times \Set$ can of course be modeled by the presheaf model
  $(\termcat\coprod\termcat,\emptyset)$, it can be modeled using a different and
  apparently contrived presheaf model $(\psmcatsetset,\psmosetset)$, which still
  might be useful in practice (see \Cref{ex:fm-product}). The category
  $\psmcatsetset$ is the category whose objects and generating morphisms are the
  ones of the diagram
  \[
    \begin{tikzcd}[sep=small]
      & \cstp &
      \\
      \cst{s_l}
      \ar[ru,"\catl"]
      & &
      \cst{s_r}
      \ar[lu,"\catr"']
    \end{tikzcd}
  \]
  satisfying no additional equations. The set $\psmosetset$ contains exactly one morphism
  $g^{\cstp} \co A \to B$ where $A$ is a presheaf with one element $\ast_l \in
  A(\cst{s_l})$ and one element $\ast_r \in A(\cst{s_r})$, $B$ is the presheaf
  generated by one element $\ast_p$, and $g^{\cstp}$ is the morphism sending $\ast_l$ to
  $B(\catl)(\ast_p)$ and $\ast_r$ to $B(\catr)(\ast_p)$:
  \[
    \begin{tikzcd}[csbo=3em,rsbo=4em,remember picture]
      &A&
      \\[-18pt]
      & |[circle,draw]| \mathmakebox[3pt][c]{\vphantom{\set\ast}}
      \ar[dl,"A(\catl)"']
      \ar[dr,"A(\catr)"]
      &
      \\
      |[circle,draw,alias=diagastl]|\mytikzmark{Aastl}{\mathmakebox[3pt][c]{\ast_l}}
      & &
      |[circle,draw,alias=diagastr]|\mytikzmark{Aastr}{\mathmakebox[3pt][c]{\ast_r}}
    \end{tikzcd}
    \quad
    \xto{\mathmakebox[15pt]{g^{\cstp}}}
    \quad
    \begin{tikzcd}[csbo=3em,rsbo=4em,remember picture]
      &B&
      \\[-18pt]
      & |[circle,draw]| \mathmakebox[3pt][c]{\ast_p}
      \ar[dl,"B(\catl)"']
      \ar[dr,"B(\catr)"]
      &
      \\
      |[shape=ellipse,draw]|\mytikzmark{Bcatlastp}{\mathmakebox[20pt][c]{\vphantom{l}\smash{B(\catl)(\ast_p)}}}
      & &
      |[shape=ellipse,draw]|\mytikzmark{Bcatrastp}{\mathmakebox[20pt][c]{\vphantom{l}\smash{B(\catr)(\ast_p)}}}
      \ar[from=Aastl,start anchor={[yshift=-4pt]base},to=Bcatlastp,end anchor={[yshift=-4pt]base},bend right=20,overlay]
      \ar[from=Aastr,start anchor={[yshift=-4pt]base},to=Bcatrastp,end anchor={[yshift=-4pt]base},bend right=20,overlay]
    \end{tikzcd}
  \]
  \vskip12pt
  An object $X \in \pswide \psmcatsetset$ which is orthogonal to $g^{\cstp}$ is then an object such that
  $(X(\cstp),X(\cstl),X(\cstr))$ is a product of $X(\cst{s_l})$ and
  $X(\cst{s_r})$. Indeed, when $X \ortho g^{\cstp}$, we have
  \[
    X(\cstp)
    \cong
    \pswide \psmcatsetset(B,X)
    \cong
    \pswide \psmcatsetset(A,X)
    \cong
    X(\cst{s_l}) \times X(\cst{s_r})\zbox.
  \]
  Thus, $X$ is essentially given by $X(\cst{s_l})$ and $X(\cst{s_r})$. Hence,
  $(\psmcatsetset,\psmosetset)^{\ortho} \simeq \Set\times \Set$.
\end{example}

\begin{example}
  \label{ex:pm-cat}
  The category $\Cat$ of small categories and functors can be modeled by a
  presheaf model $(\psmcatcat,\psmocat)$ defined as follows: $\psmcatcat$ is the
  category with objects and whose generating morphisms given by the diagram
  \[
    \begin{tikzcd}
      \csto
      \ar[r,shift right=7pt,"\catsrc"']
      \ar[r,shift left=7pt,"\cattgt"]
      &
      \cstm
      \ar[l,"{\catid}"{description}]
      \ar[r,"\catcomp"{description}]
      \ar[r,shift right=7pt,"\catr"']
      \ar[r,shift left=7pt,"\catl"]
      &
      \cstp
    \end{tikzcd}
  \]
  where $\csto$ and $\cstm$ represent the sets of objects and morphisms, and
  where $\cstp$ represents the set of \emph{composable} morphisms (that is,
  morphisms $u,v$ such that the target of $u$ is the source of $v$), with
  $\cstl$ and $\cstr$ as left and right projections, and $\cstcomp$ as
  composition operation. The morphisms of $\psmcatcat$ are moreover required to
  satisfy the following equations: $\catid \circ \catsrc = \unit{\csto}$ and
  $\catid \circ \cattgt = \unit{\csto}$ (source and target of an identity on an
  object is the object itself), $\catcomp \circ \catsrc = \catl\circ \catsrc$
  and $\catcomp \circ \cattgt = \catr\circ \cattgt$ (the source/target of a
  composition is the source/target of the first/second argument); and $\catl
  \circ \cattgt = \catr \circ \catsrc$ (the target of the left projection of an
  element of $\cstp$ is the source of the right projection).

  Now a presheaf on $\psmcatcat$ is still not a small category: we still need to enforce
  some additional conditions through $\psmocat$. First, we need to force $\cstp$ to
  correspond to pairs of composable morphisms of $\cstm$. This is done through
  orthogonality against the following $g^\cstp \co A^\cstp \to B^\cstp$ defined
  as in the following diagram
  \[
    \begin{tikzcd}[csbo=3em,rsbo=4em,remember picture]
      \vphantom{A}
      \\[-18pt]
      \cstp
      \\
      \cstm
      \\
      \csto
    \end{tikzcd}
    \qquad
    \begin{tikzcd}[csbo=3em,rsbo=4em,remember picture]
      A^\cstp
      \\[-18pt]
      |[circle,draw]| \mathmakebox[3pt][c]{\vphantom{\set\ast}}
      \\
      |[ellipse,draw]| \mytikzmark{catpairAu}{u}\hspace*{1.5em}\mytikzmark{catpairAv}{v}
      \\
      |[ellipse,draw]| \mytikzmark{catpairAy}{y}
    \end{tikzcd}
    \qquad
    \xto{\mathmakebox[15pt]{g^\cstp}}
    \qquad
    \begin{tikzcd}[csbo=3em,rsbo=4em,remember picture]
      B^\cstp
      \\[-18pt]
      |[circle,draw]| \mytikzmark{catpairBp}{p}
      \\
      |[ellipse,draw]| \mytikzmark{catpairBup}{u'}\hspace*{1.1em}\mytikzmark{catpairBvp}{v'}
      \\
      |[ellipse,draw]| \mytikzmark{catpairByp}{y'}
      \ar[from=catpairAu,start anchor=south east,to=catpairBup,end anchor=south
      west,bend right=10,overlay]
      \ar[from=catpairAv,start anchor=south east,to=catpairBvp,end anchor=south
      west,bend right=10,overlay]
      \ar[from=catpairAy,start anchor=south east,to=catpairByp,end anchor=south
      west,bend right=10,overlay]
      \ar[from=catpairAu,start anchor=south,to=catpairAy,end anchor=north west,overlay,"\cattgt"{description}]
      \ar[from=catpairAv,start anchor=south,to=catpairAy,end anchor=north east,overlay,"\catsrc"{description}]
      \ar[from=catpairBup,start anchor=south,to=catpairByp,end anchor=north west,overlay,"\cattgt"{description}]
      \ar[from=catpairBvp,start anchor=south,to=catpairByp,end anchor=north east,overlay,"\catsrc"{description}]
      \ar[from=catpairBp,start anchor=south west,to=catpairBup,end anchor=north,overlay,"\catl"{description}]
      \ar[from=catpairBp,start anchor=south east,to=catpairBvp,end anchor=north,overlay,"\catr"{description}]
    \end{tikzcd}
  \]
  Here, we simplified the description of $A^\cstp$, $B^\cstp$ and $g^\cstp$
  compared to the way we did in \Cref{ex:pm-set-times-set}. Indeed, we only
  showed some set of generating elements ($y,u,v$ for $A^\cstp$, $y',u',v',p$
  for $B^\cstp$) together with some images by some morphisms of $\psmcatcat$,
  using a light notation. For example, the arrow $u \xto{\cattgt} y$ means that
  $A^\cstp(\cattgt)(u) = y$. The presheaves $A^\cstp$ and $B^\cstp$ are then
  defined as the \eq{free presheaves} satisfying the data of the diagram. For
  example, $A^\cstp$ is the presheaf freely generated by two elements $u,v \in
  A^\cstp(\cstm)$ and one element $y \in A^\cstp(\csto)$, enforcing moreover
  that $A^\cstp(\cattgt)(u) = y = A^\cstp(\catsrc)(v)$.

  Given $X \in \pswide \psmcatcat$, the fact that $X \ortho g^\cstp$ means exactly that $X(\cstp)$
  is the set of pairs of composable arrows $u,v \in X(\cstm)$, with $X(\catl)$
  and $X(\catr)$ the left and right projections.

  We still need to enforce the axioms of categories, namely left and right
  unitality, and associativity through morphisms
  $g^{\cstlu},g^{\cstru},g^{\cstass}$. We only show the definition of
  $g^{\cstlu}$, since the definition of the other morphisms follow the same
  ideas. The morphism $g^\cstlu \co A^\cstlu \to B^\cstlu$ is thus defined as in
  the following diagram
  \[
    \begin{tikzcd}[csbo=3em,rsbo=4em,remember picture]
      \vphantom{A}
      \\[-18pt]
      \cstp
      \\
      \cstm
      \\
      \csto
    \end{tikzcd}
    \qquad
    \begin{tikzcd}[csbo=3em,rsbo=4em,remember picture]
      A^\cstlu
      \\[-18pt]
      |[circle,draw]| \mytikzmark{catluAp}{p}
      \\
      |[ellipse,draw]| \mytikzmark{catluAi}{i}\hspace*{1.5em}\mytikzmark{catluAu}{u}\hspace*{1.5em}\mytikzmark{catluAw}{w}
      \\
      |[ellipse,draw]| \mytikzmark{catluAx}{x}
    \end{tikzcd}
    \qquad
    \xto{\mathmakebox[15pt]{g^\cstlu}}
    \qquad
    \begin{tikzcd}[csbo=3em,rsbo=4em,remember picture]
      B^\cstlu
      \\[-18pt]
      |[circle,draw]|\mytikzmark{catluBpp}{p'}
      \\
      |[ellipse,draw]| \mytikzmark{catluBip}{i'}\hspace*{1.1em}\mytikzmark{catluBup}{u'}
      \\
      |[ellipse,draw]| \mytikzmark{catluBxp}{x'}
      \ar[from=catluAp,start anchor=south east,to=catluBpp,end anchor=south west,bend right=10,overlay]
      \ar[from=catluAi,start anchor=south east,to=catluBip,end anchor=south
      west,bend right=10,overlay]
      \ar[from=catluAu,start anchor=south east,to=catluBup,end anchor=south
      west,bend right=10,overlay]
      \ar[from=catluAw,start anchor=south east,to=catluBup,end anchor=south,bend right=15,overlay]
      \ar[from=catluAx,start anchor=south east,to=catluBxp,end anchor=south
      west,bend right=10,overlay]
      \ar[from=catluAp,start anchor=south west,to=catluAi,end anchor=north,overlay,"\catl"{description}]
      \ar[from=catluAp,start anchor=south,to=catluAu,end anchor=north,overlay,"\catr"{description}]
      \ar[from=catluAp,start anchor=south east,to=catluAw,end anchor=north,overlay,"\catcomp"{description}]
      \ar[from=catluAx,start anchor=north west,to=catluAi,end anchor=south east,overlay,"\catid"{description}]
      \ar[from=catluBpp,start anchor=south west,to=catluBip,end anchor=north,overlay,"\catl"{description}]
      \ar[from=catluBpp,start anchor=south,to=catluBup,end anchor=north
      west,bend right=10,overlay,"\catr"{description}]
      \ar[from=catluBpp,start anchor=south east,to=catluBup,end anchor=north
      east,bend left=10,overlay,"\catcomp"{description}]
      \ar[from=catluBxp,start anchor=north,to=catluBip,end anchor=south,overlay,"\catid"{description}]
    \end{tikzcd}
  \]
  Given $X \in \pswide \psmcatcat$, the fact that $X \ortho g^\cstlu$ means exactly that
  given $p \in X(\cstp)$ such that the left projection of $p$ is an identity,
  that it, $X(\catl)(p) = X(\catid)(x)$ for some $x \in X(\cato)$, then the
  composition $X(\catcomp)(p)$ of $p$ is just its right projection
  $X(\catr)(p)$, which is essentially left unitality.

  Putting $\psmocat = \set{g^\cstp,g^\cstlu,g^\cstru,g^\cstass}$, we easily
  verify that $X \in \pswide \psmcatcat$ such that $X \ortho g$ for $g \in
  \psmocat$ is essentially the data of a small category. So that
  $(\psmcatcat,\psmocat)$ is a presheaf model of $\Cat$.
\end{example}

\begin{example}
  Following what we have done for $\Cat$ in the previous example, it is
  similarly possible to model categories of algebraic structures (categories
  $\Grp$ of groups, $\Rng$ of rings, \etc) using presheaf models.
\end{example}

\section{Modelling functors between categories}
\label{sec:modeling-functors}

Now that we have a convenient description for an interesting class of
categories, we now introduce a similar description of the functors between such
categories.
For this purpose, we propose another modeling device, called a \emph{Kan model},
which is able to model a decent class of such functors and which can be encoded
as well under reasonable hypotheses. The modeled functors are then recovered
from these devices using \emph{left Kan extensions}, the definition of which
relies on the notions of \emph{tensors} and \emph{coends} that we recall below.

\subsection{Tensors}

The notion of tensor discussed here is an instance of a more general notion of
tensors for enriched categories~\cite{kelly1982basic,borceux1994handbook2}. In
our unenriched setting, tensors will be quite easy to define, but will still be
useful in our developments.
\begin{definition}
  \label{def:tensor}
  Given a category with coproducts $\cC$, an object $X \in \cC$ and $S \in
  \Set$, we write $X \otimes S$ for the \emph{tensor} of $X$ by $S$, defined as
  \[
    X \otimes S \qp= \coprod_{s \in S} X
    \zbox.
  \]
  This operation naturally extends to a functor
  \[
    (-)_1 \otimes (-)_2
    \co
    \quad
    \cC \times \Set
    \qp\to
    \cC
    \zbox.
  \]
\end{definition}
Tensors satisfy the following universal property, which is normally expressed in
the broader setting of enriched categories:
\begin{proposition}
  Given a category $\cC$ as in \Cref{def:tensor}, and $X,Y \in \cC$ and $S \in
  \Set$, the copairing operations on morphisms induces a natural isomorphism
  \[
    \Hom_{\Set}(S,\Hom_{\cC}(X,Y)) \cong \Hom_{\cC}(X \otimes S, Y)\zbox.
  \]
\end{proposition}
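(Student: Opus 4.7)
The plan is to unfold the definition of the tensor and reduce the statement to the universal property of the coproduct. By \Cref{def:tensor}, $X \otimes S = \coprod_{s \in S} X$, so by the universal property of coproducts in $\cC$ one has a natural bijection
\[
  \Hom_{\cC}\Bigl(\coprod_{s \in S} X,\ Y\Bigr) \qp\cong \prod_{s \in S} \Hom_{\cC}(X,Y)\zbox.
\]
On the other hand, a function $S \to \Hom_{\cC}(X,Y)$ in $\Set$ is exactly a family of morphisms $(f_s \co X \to Y)_{s \in S}$, which is the same datum as an element of $\prod_{s \in S} \Hom_{\cC}(X,Y)$. Composing these bijections yields the claimed isomorphism, where a function $\varphi \co S \to \Hom_{\cC}(X,Y)$ is sent to the copairing $[\varphi(s)]_{s \in S} \co X \otimes S \to Y$, and conversely a morphism $h \co X \otimes S \to Y$ is sent to the function mapping $s \in S$ to $h \circ \iota_s$, where $\iota_s \co X \to X \otimes S$ denotes the $s$-th coproduct injection.

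It then remains to verify naturality. Naturality in $Y$ follows from the fact that post-composition commutes with copairings (the coproduct injections are dinatural data independent of $Y$). Naturality in $X$ and in $S$ (where the functoriality in $S$ is via the reindexing of coproducts described by the functor $(-)_1 \otimes (-)_2$ of \Cref{def:tensor}) is a similar straightforward unfolding: for $X$, one uses that pre-composition with $f \co X' \to X$ on each summand is exactly what is induced on the coproduct $\coprod_s X'$; for $S$, a function $u \co S' \to S$ induces the obvious reindexing on both sides, and compatibility is again a direct consequence of the universal property of coproducts.

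The bulk of the proof is therefore a direct application of the coproduct's universal property, and no real obstacle appears. The only point requiring a bit of care is the bookkeeping for naturality, but since every map involved is built by copairing and injections, each naturality square commutes by the uniqueness part of the universal property of $\coprod_{s \in S} X$.
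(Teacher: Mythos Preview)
Your argument is correct and is the standard one: unfold $X \otimes S$ as $\coprod_{s \in S} X$, invoke the universal property of coproducts to get the bijection with $\prod_{s \in S}\Hom_\cC(X,Y) \cong \Hom_{\Set}(S,\Hom_\cC(X,Y))$, and check naturality via uniqueness of copairings. There is nothing to compare against, however: the paper states this proposition without proof, treating it as a routine consequence of the definition (and of the general enriched-category fact), so your write-up simply fills in what the paper leaves implicit.
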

As a direct consequence, we have:
\begin{proposition}
  \label{prop:otimes-pres-colimits}
  Given $X \in \cC$ and $S \in \Set$, the functors $X \otimes (-) \co \Set \to
  \cC$ and $(-) \otimes S \co \cC \to \cC$ preserve colimits.
\end{proposition}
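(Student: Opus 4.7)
The plan is to treat the two functors separately, since each uses a slightly different argument extracted from the universal property stated in the previous proposition.

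For $X \otimes (-) \co \Set \to \cC$, the natural isomorphism
\[
  \Hom_{\cC}(X \otimes S, Y) \cong \Hom_{\Set}(S, \Hom_{\cC}(X, Y))
\]
exhibits $X \otimes (-)$ as left adjoint to $\Hom_{\cC}(X, -) \co \cC \to \Set$. Since every left adjoint preserves colimits, the claim is immediate for this first functor.

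For $(-) \otimes S \co \cC \to \cC$ the same isomorphism does not directly yield an adjunction, because the variable $X$ appears on both sides. Instead, I would unfold the definition $X \otimes S = \coprod_{s \in S} X$ and invoke the standard fact that colimits commute with colimits. Concretely, given a diagram $D \co I \to \cC$ with colimit $L$,
\[
  L \otimes S \;=\; \coprod_{s \in S} \colim_{i \in I} D(i) \;\cong\; \colim_{i \in I} \coprod_{s \in S} D(i) \;=\; \colim_{i \in I} (D(i) \otimes S),
\]
since the coproduct $\coprod_{s \in S}$ is itself a colimit over the discrete category with object set $S$. Equivalently, one can factor $(-) \otimes S$ as the composite $\coprod \circ \Delta_S$, where $\Delta_S \co \cC \to \cC^{S}$ is the diagonal and $\coprod \co \cC^{S} \to \cC$ is the $S$-indexed coproduct: the diagonal preserves colimits because colimits in $\cC^{S}$ are computed pointwise, and $\coprod$ preserves them as a left adjoint to $\Delta_S$.

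The argument is entirely formal; no substantive obstacle arises. The only content is to recognise which universal property or commutation principle to invoke for each of the two functors, with the second requiring a small unfolding of the definition of tensor since it is not manifestly a left adjoint in the hypotheses at hand.
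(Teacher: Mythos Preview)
Your proof is correct and matches the spirit of the paper, which simply records the proposition ``as a direct consequence'' of the universal property without spelling out any argument. Your treatment of $X \otimes (-)$ as a left adjoint is exactly the intended reading, and your observation that $(-) \otimes S$ requires instead the commutation of colimits with the $S$-indexed coproduct (since $\cC$ is only assumed to have coproducts, so no right adjoint is available in general) is the appropriate unfolding.
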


\subsection{Coends}

Another tool that will be useful to us is the notion of coends that we recall
briefly below in a restricted case. We refer the reader to the existing
literature, like \cite[Sec.~IX.6]{mac2013categories} or~\cite{Loregian2021} for
more details:

\begin{definition}
  Given two categories $C$ and $\cD$ and a functor $H \co C \times \catop C
  \to \cD$, a \emph{cowedge} for $H$ is the data of an object $W$ of $\cD$ and
  morphisms $w_c \co H(c,c) \to W$ for each $c \in C$ such that the
  squares
  \begin{equation}
    \label{eq:coend-def-squares}
    \begin{tikzcd}[cramped,rsbo=2.5em]
      H(c,c')
      \ar[r,"{H(f,c')}"]
      \ar[d,"{H(c,f)}"']
      &
      H(c',c')
      \ar[d,"{w_{c'}}"]
      \\
      H(c,c)
      \ar[r,"{w_{c}}"']
      &
      W
    \end{tikzcd}
  \end{equation}
  commutes. A \emph{coend}, denoted $\coend^{c\in C}H(c,c)$ is a cowedge which
  is initial among cowedges, that is, such that any other cowedge $W$ on $H$
  induces a unique morphism of cowedges $\coend^{c\in C}H(c,c) \to W$.
\end{definition}
\begin{remark}
  \label{rem:coend-as-quotient}
  In the case where $C$ is small and $\cD$ is cocomplete, $\coend^{c\in
    C}H(c,c)$ can be alternatively characterised (see \cite[Proposition
  IX.5.1]{mac2013categories}) as the quotient
  \[
    \begin{tikzcd}[csbo=5em]
      \coprod\limits_{\mathmakebox[18pt][c]{f \co c \to c' \in C}} (H(c,c'))
      \ar[r,shift left,"{[\iota_{c'} \circ (H(f,c'))]_{f : c \to c' \in C}}"]
      \ar[r,shift right,"{[\iota_{c} \circ (H(c,f))]_{f : c \to c' \in C}}"']
      &[8em]
      \coprod\limits_{c \in C} H(c,c)
      \ar[r,dashed]
      &[1.8em]
      \coend^{c\in C}H(c,c)
    \end{tikzcd}
  \]
  of the coproduct $\coprod_{c \in C} H(c,c)$ under the conditions given by the
  squares~\eqref{eq:coend-def-squares}, where we wrote $\iota_c$ for the
  coprojections and $[-]_{f \in C}$ for the copairing operation (we will use
  again these notations in the following).%
  \simon{ça ne sert à rien
    d'expliciter ce quotient dans la mesure où on ne l'utilisera pas tel quel,
    car $C$ sera finiment générée.}%
\end{remark}
The notion of coends allows us to express an important proposition for
presheaves. Given a small category $C$ and $X \in \ps C$, by the Yoneda lemma,
we have a canonical morphism $\yoneda_C(c) \otimes X(c) \to X$ for every $c \in
\Ob(C)$. These morphisms even factor through the coend as a morphism $\int^{c
  \in C} \yoneda_C(c) \otimes X(c) \to X$ which is actually an isomorphism:
\begin{proposition}[Density formula/co-Yoneda lemma]
  \label{prop:density-formula}
  Given a small category $C$ and $X \in \ps C$, the canonical morphism
  \[
    \int^{c \in C} \yoneda_C(c) \otimes X(c) \to X
  \]
  is an isomorphism.
\end{proposition}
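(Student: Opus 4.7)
The plan is to first construct the canonical morphism explicitly and then check it is an isomorphism pointwise, reducing the claim to a concrete set-theoretic identification that follows from the explicit description of coends in \Cref{rem:coend-as-quotient}.

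First I would make the canonical morphism precise. For each $c \in \Ob(C)$, the Yoneda lemma gives a bijection $X(c) \cong \ps C(\yoneda_C(c), X)$, which by the universal property of the tensor produces a morphism $\yoneda_C(c) \otimes X(c) \to X$. I would check that the family $(\yoneda_C(c) \otimes X(c) \to X)_{c \in C}$ is a cowedge for $H(c_1, c_2) \defeq \yoneda_C(c_1) \otimes X(c_2)$: for $f \co c \to c'$, the two composites from $\yoneda_C(c) \otimes X(c')$ to $X$ agree by naturality of the Yoneda isomorphism in $c$, combined with functoriality of $X$. This produces the required map $\int^{c \in C} \yoneda_C(c) \otimes X(c) \to X$.

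Next, I would show this map is a pointwise isomorphism in $\ps C$. By \Cref{prop:pscat-props}, colimits in $\ps C$ (and hence coends, which are built from coproducts and coequalisers) are computed pointwise in $\Set$, and the tensor $(-) \otimes S$ commutes with colimits by \Cref{prop:otimes-pres-colimits}; evaluation at $d \in \Ob(C)$ therefore yields
\[
  \Bigl(\int^{c \in C} \yoneda_C(c) \otimes X(c)\Bigr)(d)
  \cong
  \int^{c \in C} C(d,c) \otimes X(c)
  \zbox,
\]
where $C(d,c) \otimes X(c)$ is just $C(d,c) \times X(c)$ in $\Set$. It then suffices to show that the induced map $\int^{c \in C} C(d,c) \times X(c) \to X(d)$ sending $(g \co d \to c, x \in X(c))$ to $X(g)(x)$ is a bijection.

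For this, I would use the presentation from \Cref{rem:coend-as-quotient}: the coend is the quotient of $\coprod_{c \in C} C(d,c) \times X(c)$ by the relation $(f \circ g, x) \sim (g, X(f)(x))$ for every $f \co c \to c'$, $g \in C(d,c)$, $x \in X(c')$. The candidate map is well-defined since $X(f \circ g)(x) = X(g)(X(f)(x))$ by functoriality of $X$. Surjectivity is immediate since $y \in X(d)$ is the image of $(\unit_d, y)$, and injectivity follows by noting that every class has a canonical representative: applying the relation with $f = g$ to $(g,x) = (g \circ \unit_d, x)$ gives $(g,x) \sim (\unit_d, X(g)(x))$, so each class is represented by a unique pair of the form $(\unit_d, y)$ with $y \in X(d)$. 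The main care needed is just the bookkeeping to ensure the canonical map indeed agrees, under the pointwise reduction, with the map obtained from the Yoneda lemma; beyond that the argument is routine.
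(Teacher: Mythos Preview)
Your argument is correct: the construction of the cowedge via the Yoneda lemma, the pointwise reduction using \Cref{prop:pscat-props}, and the explicit description of the coend in $\Set$ from \Cref{rem:coend-as-quotient} together yield a complete proof. The only cosmetic point is that the canonical-representative argument for injectivity is really saying that the well-defined map $(g,x)\mapsto X(g)(x)$ already separates classes, so once you have exhibited $(\unit_d,y)$ as a preimage of $y$ you are done.

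As for comparison with the paper: the paper does not actually prove this proposition but simply defers to \cite[Proposition~2.2.1]{Loregian2021}. The argument there proceeds by testing against an arbitrary presheaf and using the Yoneda lemma in the form $\ps C(\yoneda_C(c),Y)\cong Y(c)$ to identify cowedges on $\yoneda_C(-)\otimes X(-)$ with natural transformations $X\To Y$; your route is instead an explicit pointwise computation of the coend. Both are standard, and your version has the advantage of staying entirely within the elementary toolkit already set up in the paper (pointwise colimits and the quotient description of coends), so it fits the surrounding text well.
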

\begin{proof}
  See for example the proof of \cite[Proposition 2.2.1]{Loregian2021}.
\end{proof}

\subsection{Kan extensions}

Here we recall a working definition of a Kan
extension~\cite{mac2013categories,borceux1994handbook2} in the restricted case
that will interest us, that is, when the functor along which a Kan extension is
taken is the Yoneda embedding. Kan extensions will be the key construction to
present functors using Kan models in the next section.

\begin{definition}
  \label{def:kan-extension}
  Given a small category $C$, a cocomplete category $\cD$ and a functor $F\co C
  \to \cD$ as in the above definition, the \emph{left Kan extension} of $F$ (along
  $\yoneda_C$) is a functor $\Lan F \co \ps C \to \cD$, defined for $X \in
  \catob{\ps C}$ by
  \[
    \Lan F(X) = \int^{c \in C} F(c) \otimes X(c)
  \]
  and naturally extended to morphisms of presheaves $f \co X \to X'$.
\end{definition}
As can be intuited from unfolding the definitions, given $X \in \catob{\ps C}$,
$\Lan F(X)$ is an object constructed with one copy of $F(c)$ for every $x \in
X(c)$ for each $c \in \catob{C}$, adequately glued together by the coend
operator: in other words, an \emph{$X$-weighted colimit} of $F$ (see \cite[Section
6.6]{borceux1994handbook2}).
\begin{remark}
  The left Kan extension
  \[
    \Lan(-) \co \CAT(C,\cD) \to \CAT(\ps C,\cD)
  \]
  provides a left adjoint to the precomposition functor
  \[
    (-) \circ \yoneda_C \co \CAT(\ps C,\cD) \to \CAT(C,\cD)\zbox.
  \]
  The density formula (\Cref{prop:density-formula}) then states that the counit
  of this adjunction is an isomorphism, as a consequence of the \emph{density}
  (see \cite[Section X.6]{mac2013categories}) of $\yoneda_C$.
\end{remark}

\subsection{Representation for functors}

We can now introduce our description of functors between two categories that are
modeled by presheaf models. It will rely on the following device:
\begin{definition}
  Given two presheaf models $(C,O)$ and $(D,P)$ of two categories $\cC$ and
  $\cD$ respectively, a \emph{Kan model} is a functor $F \co C \to \ps D$.
\end{definition}
\begin{remark}
  Technically, Kan models are just \emph{profunctors}, also called
  \emph{distributors}~\cite{borceux1994handbook}. But since the more general
  theory of profunctors does not seem to be that much relevant to our purposes,
  we preferred to use an independent name for their use in this work.
\end{remark}

From a Kan model $F \co C \to \ps D$ as in the above definition, we now
introduce in the following definition the steps to recover a functor $\bar F \co
\cC \to \cD$:
\begin{definition}
  Let $\cC$ and $\cD$ be \lfp categories and $(C,O)$ and $(D,P)$ be associated
  presheaf models. Given a Kan model $F \co C \to \ps D$ we define functors $F'
  \co \ps C \to \ps D$, $\tilde F \co \ps C \to \orthocat P$, $\tilde F' \co
  \orthocat O \to \orthocat P$ and $\bar F \co \cC \to \cD$ as follows. We first
  define the functor $F' \co \ps C \to \ps D$ by $F' = \Lan F$. Then, we build
  the functor $\tilde F$ by putting
  \[
    \tilde F
    \qeq
    \begin{tikzcd}[csbo=5em]
      \ps C
      \ar[r,"F'"]
      &
      \ps D
      \ar[r,"\orthorefl"]
      &
      \orthocat P
      \zbox.
    \end{tikzcd}
  \]
  Next, we define the functor $\tilde F' \co \orthocat O \to \orthocat P$ by putting
  \[
    \tilde F'
    \qeq
    \begin{tikzcd}[csbo=5em]
      \orthocat O
      \ar[r,"\orthocatemb"]
      &
      \ps C
      \ar[r,"\tilde F"]
      &
      \orthocat P
      \zbox.
    \end{tikzcd}
  \]
  Finally, we define $\bar F \co \cC \to \cD$ as the composition
  \[
    \bar F
    \qeq
    \begin{tikzcd}[csbo=5em]
      \cC
      \ar[r,"\simeq"]
      &
      \orthocat O
      \ar[r,"\tilde F'"]
      &
      \orthocat P
      \ar[r,"\simeq"]
      &
      \cD
      \zbox.
    \end{tikzcd}
  \]
  A functor $\cF \co \cC \to \cD$ is \emph{modeled} by $F$ when $\cF \cong \bar
  F$.
\end{definition}

\begin{example}
  \label{ex:ob-functor-model}
  Relatively to the presheaf models of \Cref{ex:pm-cat,ex:pm-set}, the object
  functor $\catob- \co \Cat \to \Set$ mapping a small category $D$ to its set of
  objects $\catob D$ can be modeled by the Kan model $\FOb \co \psmcatcat \to
  \pswide\psmcatset$ defined as the co-Yoneda embedding $\psmcatcat(\csto,-) \co
  \psmcatcat\to \Set$ (for simplicity, we directly identify $\pswide\psmcatset$
  with $\Set$). Indeed, by the density formula (\Cref{prop:density-formula}), we
  have that, for $X \in \pswide\psmcatcat$, $X(-) \cong \coend^{c \in
    \psmcatcat} \yoneda_{\psmcatcat}(c) \otimes X(c)$. In particular, $X(\csto) \cong
  \coend^{c \in \psmcatcat} \psmcatcat(\csto,c) \otimes X(c)$. Of course, this
  formula stays true when we consider $X \in \orthocat{(\psmcatcat,\psmocat)}$.
  Moreover, in the correspondence $\Cat \simeq
  \orthocat{(\psmcatcat,\psmocat)}$, a small category $D$ is sent to some $X$
  such that $\catob D \cong X(\csto)$. Thus, $\FOb$ is modelling $\Ob$. We
  compute that, up to the identification of $\pswide\psmcatset$ with $\Set$,
  $\FOb$ is a presheaf such that $\FOb(\csto)$ has one element, $\FOb(\cstm)$
  has two elements and $\FOb(\cstp)$ has three elements.
\end{example}

\begin{example}
  \label{ex:fm-product}
  The product functor of sets $(X,Y) \mapsto X \times Y$ cannot be modeled
  through a Kan model between the presheaf model of
  $(\termcat\coprod\termcat,\emptyset)$ to $(\psmcatset,\psmoset)$. Indeed,
  given $F \co \termcat\coprod\termcat \to \pswide\psmcatset$, we have that $F'
  = \Lan F \co \pswide{\termcat\coprod\termcat}\to \pswide\psmcatset$ satisfies
  \[
    \Lan F(Z)
    \qp\cong
    (\coprod_{\mathmakebox[0pt][c]{i \in Z(\coproj_l(\ast))}} F(\coproj_l(\ast)))
    \quad
    \coprod
    \quad (\coprod_{\mathmakebox[0pt][c]{i \in Z(\coproj_r(\ast))}} F(\coproj_r(\ast)))
  \]
  for $Z \in\pswide{\termcat\coprod\termcat}$. In particular, it is impossible
  to have $\Lan F(Z) \cong Z(\coproj_l(\ast)) \times Z(\coproj_r(\ast))$ for every
  $Z$.

  However, the product functor can be modeled using the presheaf model
  $(\psmcatsetset,\psmosetset)$ for $\Set \times \Set$ of
  \Cref{ex:pm-set-times-set}. Indeed, by the density formula
  (\Cref{prop:density-formula}), we have that, for $X \in \pswide\psmcatsetset$,
  $X(-) \cong \coend^{c \in \psmcatsetset} \yoneda_{\psmcatsetset}(c) \otimes
  X(c)$. In particular, we have $X(\cstp) \cong \coend^{c \in \psmcatsetset}
  \psmcatsetset(\cstp,c) \otimes X(c)$. Of course, this formula stays true when
  we consider $X \in \orthocat{(\psmcatsetset,\psmosetset)}$, for which
  $X(\cstp) \cong X(\cst{s_l}) \times X(\cst{s_r})$. Thus, taking $\Fprod \co
  \psmcatsetset \to \pswide\psmcatset \simeq \Set$ to be
  $\psmcatsetset(\cstp,-)$, we have that $\Fprod$ models the product functor.
\end{example}
We will derive an important example from the following proposition:
\begin{proposition}
  \label{prop:density-for-product}
  Given a presheaf model $(C,O)$ and $A,B \in \orthocat{(\ps C,O)}$, we have
  that
  \[
    \begin{aligned}
      A \times B
      &
      \qp\cong
      \orthorefl(\coend^{\mathmakebox[5pt][l]{c \in C}} (\yoneda_{C}(c) \times \orthocatemb B) \otimes (\orthocatemb A)(c))
        \\
      &
      \qp\cong
      \coend^{\mathmakebox[5pt][l]{c \in C}} \orthorefl(\yoneda_{C}(c) \times \orthocatemb B) \otimes (\orthocatemb A)(c)
      \zbox.
    \end{aligned}
  \]
\end{proposition}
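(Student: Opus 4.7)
The plan is to reduce the statement to the density formula (\Cref{prop:density-formula}) applied to $\orthocatemb A$, combined with the fact that the product in $\ps C$ distributes over colimits and that $\orthorefl$, being a left adjoint, preserves them.

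First I would observe that since $\orthocatemb$ is a right adjoint (to $\orthorefl$ by \Cref{thm:ortho-adj}), it preserves limits and in particular products. Combining this with the counit isomorphism of \Cref{rem:orthocounit-isom}, one gets
\[
  A \times B \qp\cong \orthorefl\orthocatemb(A \times B) \qp\cong \orthorefl(\orthocatemb A \times \orthocatemb B).
\]
So the task reduces to rewriting $\orthocatemb A \times \orthocatemb B$ inside $\ps C$ as the coend appearing in the statement.

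Next I would apply the density formula to $\orthocatemb A \in \ps C$, giving $\orthocatemb A \cong \coend^{c \in C} \yoneda_C(c) \otimes (\orthocatemb A)(c)$. Then, since $\ps C$ is cartesian closed (\Cref{prop:pscat-props}), the functor $(-) \times \orthocatemb B$ has a right adjoint and therefore preserves all colimits, in particular the coend and the tensors (tensors are coproducts by \Cref{def:tensor}, so \Cref{prop:otimes-pres-colimits} also applies). Pulling $\times \orthocatemb B$ through yields
\[
  \orthocatemb A \times \orthocatemb B
  \qp\cong
  \coend^{c \in C} (\yoneda_C(c) \times \orthocatemb B) \otimes (\orthocatemb A)(c),
\]
where I have used the natural isomorphism $(X \otimes S) \times Y \cong (X \times Y) \otimes S$, which holds because tensor is a coproduct and $(-) \times Y$ preserves coproducts. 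Applying $\orthorefl$ and combining with the first step gives the first claimed isomorphism.

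For the second isomorphism, I would use that $\orthorefl$ is a left adjoint, hence preserves all colimits; since the coend is a particular colimit (cf.\ \Cref{rem:coend-as-quotient}) and the tensor by a set $(\orthocatemb A)(c)$ is a coproduct, $\orthorefl$ commutes with both, yielding
\[
  \orthorefl\Bigl(\coend^{c \in C} (\yoneda_C(c) \times \orthocatemb B) \otimes (\orthocatemb A)(c)\Bigr)
  \qp\cong
  \coend^{c \in C} \orthorefl(\yoneda_C(c) \times \orthocatemb B) \otimes (\orthocatemb A)(c).
\]
I expect the only delicate point to be the clean justification of these commutations: one must invoke cartesian closedness of $\ps C$ to move $\times \orthocatemb B$ inside the coend, and the adjunction of \Cref{thm:ortho-adj} to move $\orthorefl$ inside. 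Both are standard but should be stated explicitly.
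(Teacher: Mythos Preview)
Your proof is correct and follows essentially the same approach as the paper's: density formula on $\orthocatemb A$, commutation of $(-)\times\orthocatemb B$ with colimits via cartesian closure of $\ps C$, and commutation of $\orthorefl$ with colimits as a left adjoint, together with the identification $A\times B \cong \orthorefl(\orthocatemb A \times \orthocatemb B)$. The only difference is cosmetic ordering---you establish the latter identification first, the paper last---and you spell out explicitly that $\orthocatemb$ preserves products as a right adjoint, which the paper leaves implicit.
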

\begin{proof}
  By the density formula (\Cref{prop:density-formula}), and since the functor
  $(-) \times \orthocatemb B$ commutes with colimits, as a consequence of the
  cartesian closure of $\ps C$ (\Cref{prop:pscat-props,prop:charact-closure}),
  we have that
  \[
    \begin{aligned}
      \orthocatemb A \times \orthocatemb B
      &\cong
      (\coend^{c \in C} (\yoneda_{C}(c) \otimes (\orthocatemb A)(c))) \times \orthocatemb B
      \\
      &\cong
      \coend^{c \in C} (\yoneda_{C}(c) \times \orthocatemb B) \otimes (\orthocatemb A)(c)
      \zbox.
    \end{aligned}
  \]
  Moreover, since $\orthorefl$ is a left adjoint, it commutes with colimits, so that
  \[
    \begin{aligned}
      &\orthorefl(\coend^{c \in C} (\yoneda_{C}(c) \times \orthocatemb B) \otimes (\orthocatemb A)(c))
        \\
      \cong&
      \coend^{c \in C} \orthorefl(\yoneda_{C}(c) \times \orthocatemb B) \otimes (\orthocatemb A)(c)\zbox.
    \end{aligned}
  \]
  We conclude, using \Cref{rem:orthocounit-isom}, by observing that
  \[
    A \times B \cong \orthorefl \orthocatemb (A \times B) \cong \orthorefl
    (\orthocatemb A \times \orthocatemb B)
    \zbox.
    \qedhere
  \]
\end{proof}
\begin{example}
  \label{ex:times-functor-model}
  Let $\cC$ be an \lfp category modeled by a presheaf model $(C,O)$. Given $B
  \in \cC$, by \Cref{prop:density-for-product}, the functor $- \times B \co \cC
  \to \cC$ can be modeled by the Kan model $\Ftimes \co C \to \ps C$ defined by
  $\Ftimes(-) = \yoneda_{C}(-)\times \orthocatemb B$.
\end{example}

\section{Left adjointness through modeling}
\label{sec:la-through-modeling}

We now apply our modeling of categories and functors to tackle the problem of
showing that a functor is a left adjoint. For this purpose, we introduce an
effective criterion that relies on this modeling. As we will show in a later
section, under reasonable finiteness hypotheses, this criterion can be
computationally checked, either fully automatically in good cases, or by
providing proof assistance to the user.

\subsection{The reflection construction}

Our criterion will be derived from a careful study of the proof of the
adjunction between an orthogonal subcategory and its containing category, as is
stated by \Cref{thm:ortho-adj}. Although we will not recall the whole proof of
that theorem here, we will at least describe the construction of the functor
$\orthorefl$ on objects. For that, it is useful to analyse how the object $X$
can fail to be in $\orthocat O$ and what can be done to fix that. Consider $f
\co X \to A$ and $g \co A \to B \in O$. First, there can be no $h \co B \to X$
such that \eqref{eq:orthogonal-prop} commutes. In this case, we can \eq{change}
$X$ for such a factorisation to exist: we take the pushout
\[
  \begin{tikzcd}[cramped,rsbo=2.5em]
    A
    \ar[r,"g"]
    \ar[d,"f"']
    &
    B
    \ar[d,"p",dashed]
    \\
    X
    \ar[r,"q"',dashed]
    &
    X'
  \end{tikzcd}
\]
so that now, $q \circ f \co A \to X'$ admits a factorisation through $g$.
Second, it can be the case that there are two different factorisations $h,h'
\co B \to X$ for \eqref{eq:orthogonal-prop}. In this case, we \eq{change}
again $X$ for these two factorisations to be identified: we take the coequaliser
\[
  \begin{tikzcd}
    B
    \ar[r,shift left,"h"]
    \ar[r,shift right,"h'"']
    &
    X
    \ar[r,dashed,"r"]
    &
    X'
  \end{tikzcd}
\]
so that the factorisations $r \circ h$ and $r \circ h'$ of $r \circ f$ are now
identified. Of course, after applying one \eq{change} to $X$, several other
factorisations might still be missing or non-unique, and some new
factorisation problems might arise on $X'$. In order to construct the image of
$X$ by the functor $\orthorefl$, we use an iterative procedure where, at each step, we
try to be exhaustive on the solving of the existing factorisation problems.

Given an object $X\in \cC$, the object $LX$ is defined as the result (or, more
precisely, the colimit) of an iterative construction
\[
  \begin{tikzcd}[csbo=3em]
    X_0
    \ar[rr]
    &&
    X_1
    \ar[rr]
    &&
    X_2
    \ar[rr]
    &&
    \cdots
  \end{tikzcd}
\]
where $X_0 = X$ and, given $i \in \N$, $X_{i+1}$ is defined from $X_i$ as
follows. Given the canonical enumeration $(g^e_{i,j} \co A^e_{i,j} \to
B^e_{i,j},f^e_{i,j} \co B^e_{i,j}\to X_i)_{j\in J^e}$, as projections, of the
set
\[
  \begin{split}
    J^{i,e} = \{(g, f) \mid g \co A \to B\in O,\;f \co A \to X_i \in \cC,\\
    \not\exists h \text{ s.t. } h \circ g = f\}
  \end{split}
\]
and the canonical enumeration $(g^u_{i,j},h^u_{i,j},h'^u_{i,j})_{j \in J^{i,u}}$ of the set
\[
  \begin{split}
  J^{i,u} = \{(g,h,h') \mid g \co A \to B\in O,\;h,h' \co B \to X_i \in \cC,\;
    \\
    h \circ
    g = h' \circ g \text{ and } h \neq h'\}\zbox,
  \end{split}
\]
we build the diagram
\begin{equation}
  \label{eq:step-Xi-diag}
  \begin{tikzcd}[cramped]
    &
    \smash{\mathmakebox[0pt][r]{\raisebox{1.2em}{$\ddots$}}}\;A^e_{i,j}\;\smash{\mathmakebox[0pt][l]{\raisebox{-1.2em}{$\ddots$}}}
    \ar[r,"g^e_{i,j}"]
    \ar[d,"f^e_{i,j}"{description}]
    &
    B^e_{i,j}
    \\
    \smash{\mathmakebox[0pt][l]{\;\;\raisebox{1.2em}{$\vdots$}}}\smash{\mathmakebox[0pt][l]{\;\;\raisebox{-1.5em}{$\vdots$}}}B^u_{i,j}
    \ar[r,"h^u_{i,j}",shift left]
    \ar[r,"h'^u_{i,j}"',shift right]
    &
    X_i
  \end{tikzcd}
\end{equation}
whose set of objects can be indexed by $J^{i,e} \sqcup J^{i,e} \sqcup J^{i,u}
\sqcup \set{\ast}$, where $\set{\ast}$ represents the object of the diagram sent
to $X_i$. By taking the colimit on this diagram, we handle all the instances of
the two operations of \eq{change} above at once. We write $X_{i+1}$ for the
colimit of this diagram, and $\transemb_i \co X_i \to X_{i+1}$ for the
associated coprojection.

The \emph{reflection} $LX$ of an object $X \in \cC$ is then obtained as a
colimit in $\cC$
\begin{equation}
  \label{eq:reflection-def}
  \begin{tikzcd}[cramped,rsbo=2.5em,csbo=3em]
    &&&\orthocatemb \orthorefl X&&&
    \\
    X_0
    \ar[rr,"\transemb_0"{description}]
    \ar[rrru,"\eta_X",dashed]
    &&
    X_1
    \ar[rr,"\transemb_1"{description}]
    \ar[ru,dashed]
    &&
    X_2
    \ar[rr,"\transemb_2"{description}]
    \ar[lu,dashed]
    &&
    \cdots
    \ar[lllu,dashed]
  \end{tikzcd}
\end{equation}
where we write $\eta_X \co X \to \orthocatemb\orthorefl X$ for the coprojection
associated with $X_0 = X$. The above transfinite composition ensures that
$\orthocatemb\orthorefl X$ is saturated by the two operations of \eq{changes}
discussed at the beginning, so that $\orthorefl X \in \orthocat O$ is
well-defined.

\subsection{A criterion for left adjointness}

Let $\cC$ and $\cD$ be two categories modeled by presheaf models $(C,O)$ and
$(D,P)$, and $\cF \co \cC \to \cD$ be a functor modeled by a Kan model $F \co C
\to \ps D$. In this section, we give a criterion based on $F$ to determine
whether $\cF$ is a left adjoint.

First, since $\cF \cong \bar F$, it amounts to find such a criterion directly
for $\bar F$, or even $\tilde F'$, since $\cC \simeq \orthocat O$ and $\cD
\simeq \orthocat P$. We have the following folklore result for \lfp categories:
\begin{proposition}
  \label{prop:lfp-charact-la}
  A functor between two \lfp categories is a left adjoint if and only if it
  preserves all colimits.
\end{proposition}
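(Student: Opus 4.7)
The forward implication is the classical fact that any left adjoint preserves colimits; it follows from the hom-set adjunction and from $\Hom(-,Z)$ sending colimits to limits, and does not use the \lfp hypothesis. For the converse, the plan is to exploit the presentation of \lfp categories provided by \Cref{def:lfp-categories-as-ps-ortho-cats}: up to equivalence, assume $\cC = \orthocat{(\ps C, O)}$ with embedding $\orthocatemb$ and reflection $\orthorefl$ (\Cref{thm:ortho-adj}); the target $\cD$ will not need to be similarly resolved. The strategy is to lift $F \co \cC \to \cD$ to the cocontinuous functor $F' = F\circ \orthorefl \co \ps C \to \cD$, produce a right adjoint $G'$ of $F'$, and then show that $G'$ factors through $\orthocat O \hookrightarrow \ps C$, yielding the right adjoint of $F$.

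First, $F'$ preserves colimits, since $\orthorefl$ does as a left adjoint and $F$ does by hypothesis. By the density formula (\Cref{prop:density-formula}), $F' \cong \Lan(F' \circ \yoneda_C)$, so $F'$ is a left Kan extension from a small category into a locally small $\cD$; as such it admits a right adjoint $G' \co \cD \to \ps C$ given on objects by $G'(Y)(c)=\cD(F'(\yoneda_C(c)),Y)$, with the adjunction verification being the standard coend-to-end computation for natural transformations out of a left Kan extension.

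The key step is then to verify that $G'(Y) \ortho g$ for every $g \co A \to B$ in $O$ and every $Y \in \cD$. Transporting this orthogonality condition along $F' \dashv G'$, it amounts to showing that $\cD(F'(g),Y) \co \cD(F'(B),Y) \to \cD(F'(A),Y)$ is a bijection for every $Y$, i.e., that $F'(g) = F(\orthorefl(g))$ is an isomorphism. For this, note that every $X \in \cC$ is orthogonal to $g$ by definition of $\orthocat O$, so $\ps C(g,\orthocatemb X)$ is bijective, which via $\orthorefl \dashv \orthocatemb$ translates into $\cC(\orthorefl(g),X)$ being bijective for every $X \in \cC$; the Yoneda lemma in $\cC$ then makes $\orthorefl(g)$ an isomorphism, and applying $F$ gives the claim. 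Hence $G' = \orthocatemb \circ G$ for a well-defined $G \co \cD \to \cC$.

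Finally, combining $\orthorefl\orthocatemb \cong \catunit{\cC}$ (\Cref{rem:orthocounit-isom}), the adjunction $F'\dashv G'$, and the fully faithfulness of $\orthocatemb$ yields, for $X \in \cC$ and $Y \in \cD$,
\[
  \cD(F(X),Y) \cong \cD(F'(\orthocatemb X),Y) \cong \ps C(\orthocatemb X, \orthocatemb G(Y)) \cong \cC(X,G(Y))\zbox,
\]
natural in both variables, so $F\dashv G$. The main obstacle of the plan is the orthogonality verification of the third step; it however falls out cleanly from the universal property of the reflection, so that no solution-set style argument or invocation of a classical adjoint functor theorem is required.
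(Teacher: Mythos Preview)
Your argument is correct, but it takes a genuinely different route from the paper. The paper dispatches the converse by invoking the dual Special Adjoint Functor Theorem, citing that an \lfp category is cocomplete, co-well-powered, and has a generating family; no explicit right adjoint is built. You instead exploit the very machinery the paper sets up: you pass to $\ps C$ via the reflection, realise the lifted functor as a left Kan extension along $\yoneda_C$, write down its nerve right adjoint $G'(Y)=\cD(F'(\yoneda_C(-)),Y)$, and then push $G'$ back into $\orthocat O$ by checking that $F'(g)=F(\orthorefl(g))$ is invertible for each $g\in O$. That last step is precisely \Cref{lem:ortho-morphs-refl-are-isos}, which the paper proves later and independently; you recover it here for free from the reflection adjunction and Yoneda. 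The trade-off is clear: the paper's proof is a two-line appeal to heavy classical results (co-well-poweredness of \lfp categories is not entirely trivial), whereas yours is longer but stays inside the presheaf-model framework, avoids SAFT entirely, and actually produces the right adjoint explicitly. Given the aims of this paper, your approach is arguably more in keeping with its spirit, and it foreshadows the later use of $\orthorefl(g)$ being an isomorphism.
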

\begin{proof}
  In order to prove that a functor $\cF \co \cC \to \cD$ between two categories
  is a left adjoint, it is enough, by a dualised version of the Special Adjoint
  Functor Theorem (see \cite[Theorem 3.3.4]{borceux1994handbook}), to show that:
  \begin{enumerate}
  \item $\cC$ is cocomplete;
  \item $\cF$ preserves small colimits;
  \item $\cC$ is co-well-powered;
  \item $\cC$ has a generating family.
  \end{enumerate}
  But, in the case where $\cC$ is \lfp, we have that $\cC$ is cocomplete (by
  definition), $\cC$ is co-well-powered (see \cite[Theorem
  1.58]{adamek1994locally}) and $\cC$ has a generating family (see \cite[Theorem
  1.20]{adamek1994locally}). Thus, the property holds.
\end{proof}
\begin{proposition}
  \label{prop:Fp-pres-colimits}
  The functor $F' \co \ps C \to \ps D$ preserves colimits.
\end{proposition}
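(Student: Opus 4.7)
The plan is to observe that $F' = \Lan F$ admits a right adjoint, from which colimit preservation follows by general adjoint nonsense. The candidate right adjoint $G \co \ps D \to \ps C$ sends $Y \in \ps D$ to the presheaf $c \mapsto \ps D(F(c), Y)$, with the obvious functoriality in $c$ (via $F$) and in $Y$. Starting from the coend formula $F'(X) = \int^{c \in C} F(c) \otimes X(c)$ of \Cref{def:kan-extension}, I would compute
\begin{align*}
  \ps D(F'(X), Y)
  &\;\cong\; \int_{c \in C} \ps D\bigl(F(c) \otimes X(c),\, Y\bigr) \\
  &\;\cong\; \int_{c \in C} \Set\bigl(X(c),\, \ps D(F(c), Y)\bigr) \\
  &\;\cong\; \ps C(X, G(Y)),
\end{align*}
using in order: (i) that the contravariant functor $\ps D(-, Y)$ turns coends into ends; (ii) the universal property of tensors (the proposition following \Cref{def:tensor}); (iii) the standard end formula for natural transformations between presheaves. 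This exhibits $F' \dashv G$, so $F'$ preserves colimits.

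A more hands-on alternative, closer to the spirit of the paper, avoids naming a right adjoint and argues directly. Given a diagram $X_{(-)} \co I \to \ps C$ with colimit $L$, one unfolds
\[
  F'(L) \;=\; \int^{c} F(c) \otimes L(c) \;\cong\; \int^{c} F(c) \otimes \bigl(\colim_i X_i(c)\bigr),
\]
using that colimits in $\ps C$ are computed pointwise (\Cref{prop:pscat-props}). One then commutes the colimit past $F(c) \otimes -$ using \Cref{prop:otimes-pres-colimits}, and past the coend using that coends are themselves colimits, yielding $\colim_i F'(X_i)$.

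The only delicate point in either approach is the commutation of coends with colimits (or, dually, of ends with limits on the hom side). This is standard but is not a consequence of any result already cited in the paper, so I would either refer the reader to \cite{Loregian2021} or briefly justify it via the quotient description of coends given in \Cref{rem:coend-as-quotient}: colimits commute with coproducts and coequalisers, and hence with any colimit built from them, including the quotient there.
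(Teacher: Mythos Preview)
Your second, ``hands-on'' approach is exactly the paper's proof: the paper unfolds $\Lan F(\colim_i X_i)$, passes to pointwise colimits via \Cref{prop:pscat-props}, commutes the colimit past the tensor via \Cref{prop:otimes-pres-colimits}, and past the coend via \Cref{rem:coend-as-quotient}---precisely the chain you wrote, and you even flagged the right justification for the coend/colimit commutation.

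Your first approach, exhibiting the right adjoint $G(Y) = \ps D(F(-),Y)$ via the end/coend calculus, is a correct and genuinely different route that the paper does not take. It is more conceptual: once $F' \dashv G$ is established, colimit preservation is immediate and reusable, and the right adjoint it names is in fact the one implicitly underlying the later construction of $\tilde F$ and the criterion of \Cref{thm:criterion-la}. The cost is that it invokes slightly more machinery (ends, the end formula for natural transformations) than the paper has set up, whereas the paper's direct computation stays entirely within the colimit/coend toolbox already introduced. Either argument is fine here; the paper simply opted for the self-contained one.
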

\begin{proof}
  Let $I$ be a small category and $(X_{(-)} \co I \to \ps C)_{i \in I}$ be a
  diagram on $\ps C$. We compute that
  \begin{align}
    \label{align:fp-pres-colim:one}
    \Lan F(\colim_i X_i)
    &\cong
    \int^{c \in C} F(c) \otimes ((\colim_i X_i)(c))
      \\
    \label{align:fp-pres-colim:two}
    &\cong
      \int^{c \in C} F(c) \otimes \colim_i (X_i(c))
    \\
    \label{align:fp-pres-colim:three}
    &\cong
      \int^{c \in C} \colim_i(F(c) \otimes  X_i(c))
    \\
    \label{align:fp-pres-colim:four}
    &\cong
      \colim_i\left(  \int^{c \in C} (F(c) \otimes  X_i(c))\right)
    \\
    \label{align:fp-pres-colim:five}
    &\cong
      \colim_i \Lan F(X_i)
  \end{align}
  where the steps are justified as follows: from \eqref{align:fp-pres-colim:one}
  to~\eqref{align:fp-pres-colim:two}, because colimits in presheaves are
  computed pointwisely; from \eqref{align:fp-pres-colim:two}
  to~\eqref{align:fp-pres-colim:three}, by \Cref{prop:otimes-pres-colimits};
  from \eqref{align:fp-pres-colim:three} to~\eqref{align:fp-pres-colim:four} by
  the commutativity of colimits with colimits, since coends can be expressed as
  colimits as we saw in \Cref{rem:coend-as-quotient}.
\end{proof}

\begin{proposition}
  \label{prop:tildeF-pres-colimits}
  The functor $\tilde F \co \ps C \to \orthocat P$ preserves colimits.
\end{proposition}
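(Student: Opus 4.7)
The plan is very short, because the statement is an essentially formal corollary of what has already been established. By definition, $\tilde F$ factors as
\[
  \tilde F \qeq \orthorefl \circ F' \co \ps C \to \ps D \to \orthocat P.
\]
Since the composition of two colimit-preserving functors preserves colimits, it suffices to check that each factor does.

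For the first factor, there is nothing to do: preservation of colimits by $F' = \Lan F$ is exactly the content of the immediately preceding \Cref{prop:Fp-pres-colimits}. For the second factor, I would invoke \Cref{thm:ortho-adj}, which exhibits $\orthorefl$ as a left adjoint (with right adjoint $\orthocatemb$). It is a standard fact of category theory that every left adjoint preserves all colimits that exist in its domain; applying this to $\orthorefl$, and using that $\ps D$ is cocomplete by \Cref{prop:pscat-props}, gives the claim. In particular, $\orthorefl$ sends a colimiting cocone in $\ps D$ to a colimiting cocone in $\orthocat P$, which is what is needed.

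Combining these two observations, for any small diagram $X_{(-)} \co I \to \ps C$ we get the chain of natural isomorphisms
\[
  \tilde F(\colim_i X_i)
  \cong \orthorefl(F'(\colim_i X_i))
  \cong \orthorefl(\colim_i F'(X_i))
  \cong \colim_i \orthorefl(F'(X_i))
  \cong \colim_i \tilde F(X_i),
\]
and the cocones match up naturally, so the preservation is genuine and not merely up to abstract isomorphism of the colimit object.

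There is no substantial obstacle here: the only thing worth being careful about is the distinction between preservation of colimits in $\ps D$ versus in $\orthocat P$, but this is handled automatically by the adjointness of $\orthorefl \dashv \orthocatemb$, since a left adjoint into any category delivers genuine colimits in that category.
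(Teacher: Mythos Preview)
Your proof is correct and follows exactly the same approach as the paper: factor $\tilde F$ as $\orthorefl \circ F'$, invoke \Cref{prop:Fp-pres-colimits} for the first factor and the left-adjointness of $\orthorefl$ from \Cref{thm:ortho-adj} for the second. The extra detail you add (cocompleteness of $\ps D$, the explicit chain of isomorphisms) is fine but not required.
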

\begin{proof}
  The functor $\orthorefl \co \ps D \to \orthocat P$ preserves colimits as a
  left adjoint (see \Cref{thm:ortho-adj}), so that $\tilde F$ does as well as
  the composition of two functors which preserve colimits by
  \Cref{prop:Fp-pres-colimits}.
\end{proof}

\begin{proposition}
  \label{prop:barF-pres-colim-iff-tildeFpetacolim-isom}
  Given a Kan model $F \co C \to \ps D$, the functor $\tilde F' \co \orthocat O
  \to \orthocat P$ preserves colimits if and only if $\tilde F
  (\eta_{\colim^{\ps C}_i \orthocatemb A_i})$ is an isomorphism for all small
  diagram $A_{(-)} \co I \to \orthocat O$.
\end{proposition}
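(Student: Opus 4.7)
The plan is to show that, up to canonical isomorphisms, the comparison morphism measuring the defect of colimit preservation by $\tilde F'$ on a diagram $A_{(-)}$ coincides with $\tilde F(\eta_{\colim^{\ps C}_i \orthocatemb A_i})$; the stated equivalence will then be immediate.

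First I would record how colimits in $\orthocat O$ are computed. Since by \Cref{thm:ortho-adj} the embedding $\orthocatemb$ has left adjoint $\orthorefl$, and by \Cref{rem:orthocounit-isom} the counit is an isomorphism, any small diagram $A_{(-)} \co I \to \orthocat O$ admits a colimit of the form $\colim_i^{\orthocat O} A_i \cong \orthorefl(\colim_i^{\ps C} \orthocatemb A_i)$, whose coprojections are the images under $\orthorefl$ of the $\ps C$-coprojections. After applying $\orthocatemb$, the canonical map from $\colim_i^{\ps C} \orthocatemb A_i$ to $\orthocatemb \orthorefl(\colim_i^{\ps C} \orthocatemb A_i)$ is precisely the unit $\eta_{\colim_i^{\ps C} \orthocatemb A_i}$.

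Next I would unfold both sides of the preservation comparison. By definition, $\tilde F'(\colim_i^{\orthocat O} A_i) = \tilde F(\orthocatemb \colim_i^{\orthocat O} A_i) \cong \tilde F(\orthocatemb \orthorefl(\colim_i^{\ps C} \orthocatemb A_i))$. On the other side, since $\tilde F$ preserves colimits by \Cref{prop:tildeF-pres-colimits}, one has $\colim_i \tilde F'(A_i) = \colim_i \tilde F(\orthocatemb A_i) \cong \tilde F(\colim_i^{\ps C} \orthocatemb A_i)$. A coprojection chase through the two isomorphisms then identifies the universal comparison map $\colim_i \tilde F'(A_i) \to \tilde F'(\colim_i^{\orthocat O} A_i)$ with $\tilde F$ applied to $\eta_{\colim_i^{\ps C} \orthocatemb A_i}$.

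From this identification the proposition follows: $\tilde F'$ preserves the colimit of $A_{(-)}$ precisely when the comparison map is an isomorphism, precisely when $\tilde F(\eta_{\colim_i^{\ps C} \orthocatemb A_i})$ is. The main obstacle is the coprojection chase of the previous step: one must verify that, for each $i \in I$, the coprojection $\tilde F'(A_i) \to \tilde F'(\colim_i^{\orthocat O} A_i)$ factors through the two isomorphisms as the coprojection into $\colim_i \tilde F'(A_i)$ followed by $\tilde F(\eta_{\colim_i^{\ps C} \orthocatemb A_i})$. This reduces to applying $\tilde F$ to a naturality square for $\eta$ together with the universal property of $\colim_i^{\ps C} \orthocatemb A_i$, so no serious difficulty is expected once the diagrams are laid out.
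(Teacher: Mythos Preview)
Your proposal is correct and follows essentially the same approach as the paper: both compute the colimit in $\orthocat O$ as $\orthorefl$ of the colimit in $\ps C$, use \Cref{prop:tildeF-pres-colimits} to pull the $\ps C$-colimit through $\tilde F$, and then identify the canonical comparison map $\colim_i \tilde F' A_i \to \tilde F'(\colim_i A_i)$ with $\tilde F(\eta_{\colim_i \orthocatemb A_i})$ up to the two bracketing isomorphisms. The paper's presentation is only slightly different in that it emphasises uniqueness of the comparison cocone morphism to conclude the identification, whereas you propose an explicit coprojection chase; both amount to the same verification.
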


\begin{proof}
  Let $A_{(-)} \co I \to \orthocat O$ be a diagram in $\orthocat O$. We first
  observe that the colimit of $A_{(-)}$ in $\orthocat O$ is computed as the
  reflection of the one in $\ps C$. Indeed, for every $i \in I$, we have a
  natural isomorphism $A_{i} \cong \orthorefl \orthocatemb A_i$ which is natural
  in $i$. Thus, $\colim_i A_i \cong \colim_i \orthorefl \orthocatemb A_i$, and
  since $\orthorefl$ preserves colimits, we moreover have $\colim_i A_i \cong
  \orthorefl (\colim_i \orthocatemb A_i)$.

  Since $\colim_i \tilde F' A_i$ is a colimit cocone, it is initial among all
  other cocones for the diagram $\tilde F' A_{(-)}$. Thus, the universal
  morphism of cocones between $\colim_i \tilde F' A_i$ and $\bar F (\colim_i
  A_i)$ is unique. And we can express it as the composite of cocone morphisms of
  \Cref{fig:cocone-morphism}
  \begin{figure*}[h]
    \centering
    \[
      \hss
      \colim_i \tilde F' A_i
      =
      \colim_i \tilde F \orthocatemb A_i
      \cong
      \tilde F \colim_i \orthocatemb A_i
      \xto{\tilde F (\eta_{\colim_i \orthocatemb A_i})}
      \tilde F \orthocatemb \orthorefl \colim_i \orthocatemb A_i
      \cong
      \tilde F  \orthocatemb \colim_i A_i
      =
      \tilde F' (\colim_i A_i)
      \hss
    \]
    \caption{The cocone morphism between $\colim_i \tilde F' A_i$ and $\tilde F' (\colim_i A_i)$.}
    \label{fig:cocone-morphism}
  \end{figure*}
  where the isomorphism $ \colim_i \tilde F \orthocatemb A_i \cong \tilde F
  \colim_i \orthocatemb A_i $ comes from the fact that $\tilde F$ preserves
  colimits by \Cref{prop:tildeF-pres-colimits}, the isomorphism $ \tilde F
  \orthocatemb \orthorefl \colim_i \orthocatemb A_i \cong \tilde F \orthocatemb
  \colim_i A_i $ is the image by $\tilde F \orthocatemb$ of the isomorphism
  $\orthorefl \colim_i \orthocatemb A_i \cong \colim_i A_i$ introduced at the
  beginning. And, by definition, $\tilde F'$ preserves the colimit $\colim_i A_i$
  iff the above sequence of morphisms of cocones is an isomorphism. This is the
  case iff $\tilde F (\eta_{\colim_i \orthocatemb A_i})$ is an isomorphism.
\end{proof}

\begin{proposition}
  \label{prop:Feta-isom-impl-barF-pres-colimits}
  If $\tilde F (\eta_{X})$ is an isomorphism for every $X \in \catob\cC$, the
  functor $\bar F \co \orthocat  O \to \orthocat  P$ preserves colimits.
\end{proposition}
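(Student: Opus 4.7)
The plan is to simply invoke the equivalent characterisation given by \Cref{prop:barF-pres-colim-iff-tildeFpetacolim-isom}. First, I observe that $\bar F$ preserves colimits if and only if $\tilde F'$ does, since $\bar F$ is obtained from $\tilde F'$ by pre- and post-composition with categorical equivalences $\cC \simeq \orthocat O$ and $\orthocat P \simeq \cD$, and equivalences both preserve and reflect all colimits.

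By \Cref{prop:barF-pres-colim-iff-tildeFpetacolim-isom}, showing that $\tilde F'$ preserves colimits reduces to checking that, for every small diagram $A_{(-)} \co I \to \orthocat O$, the morphism $\tilde F(\eta_{\colim^{\ps C}_i \orthocatemb A_i})$ is an isomorphism. But here the object to which $\eta$ is applied, namely $\colim^{\ps C}_i \orthocatemb A_i$, lies in $\ps C$, which (interpreting the statement accordingly, following the convention of \Cref{thm:ortho-adj} where the unit $\eta$ is indexed by objects of the ambient cocomplete category) is precisely the domain of the family $\eta$. The hypothesis therefore applies directly to this particular object, giving that $\tilde F(\eta_{\colim^{\ps C}_i \orthocatemb A_i})$ is an isomorphism.

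Putting these two facts together, the criterion of \Cref{prop:barF-pres-colim-iff-tildeFpetacolim-isom} is satisfied, so $\tilde F'$ preserves all small colimits, and hence so does $\bar F$. There is essentially no obstacle: the content of the proposition is that the hypothesis, stated globally on all objects of the ambient presheaf category, is \emph{a fortiori} strong enough to ensure the specific instances required by the preceding characterisation. The only point deserving care is the minor one just mentioned, namely identifying the indexing of $\eta$ with objects of $\ps C$ rather than of the reflective subcategory, so that the hypothesis can indeed be invoked at $X = \colim^{\ps C}_i \orthocatemb A_i$.
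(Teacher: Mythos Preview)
Your proposal is correct and follows essentially the same approach as the paper, which simply invokes \Cref{prop:barF-pres-colim-iff-tildeFpetacolim-isom} in a one-line proof. Your additional remarks about passing between $\bar F$ and $\tilde F'$ via equivalences, and about interpreting the indexing of $\eta$ over objects of $\ps C$, merely make explicit what the paper leaves implicit (and indeed clarify a notational slip in the statement).
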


\begin{proof}
  By \Cref{prop:barF-pres-colim-iff-tildeFpetacolim-isom}.
\end{proof}

\begin{proposition}
  \label{prop:Ftildeeta-isom}
  If $\tilde F(g)$ is an isomorphism for every $g \in O$, then for every $X \in
  \ps C$, $\tilde F(\eta_X)$ is an isomorphism.
\end{proposition}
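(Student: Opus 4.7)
The plan is to induct (transfinitely) along the canonical construction of $\eta_X$ from the previous subsection, using that $\tilde F$ preserves colimits (\Cref{prop:tildeF-pres-colimits}). Recall that $\eta_X$ is the coprojection from $X_0 = X$ to the colimit $\colim_i X_i$ of the sequence $X_0 \to X_1 \to X_2 \to \cdots$, where each $X_{i+1}$ is built as the colimit of the diagram~\eqref{eq:step-Xi-diag} gluing all pushouts along elements $g \in O$ and all coequalisers of competing factorisations. Since $\tilde F$ preserves colimits, $\tilde F(\colim_i X_i) \cong \colim_i \tilde F(X_i)$; so if each transition $\tilde F(\transemb_i) \co \tilde F(X_i) \to \tilde F(X_{i+1})$ is an isomorphism, then so is the coprojection from $\tilde F(X_0) = \tilde F(X)$ to this colimit, which is precisely $\tilde F(\eta_X)$.

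The heart of the argument is therefore to show that $\tilde F(\transemb_i)$ is an isomorphism for every $i$. Applying $\tilde F$ to the diagram~\eqref{eq:step-Xi-diag} and using preservation of colimits, $\tilde F(X_{i+1})$ is the colimit of the image diagram. In each pushout sub-component along some $g^e_{i,j} \co A^e_{i,j} \to B^e_{i,j}$, the map $\tilde F(g^e_{i,j})$ is an isomorphism by hypothesis, and pushouts of isomorphisms are isomorphisms, so this piece contributes only identifications that are already isomorphic to $\tilde F(X_i)$. In each coequaliser sub-component with pair $h^u_{i,j}, h'^u_{i,j} \co B^u_{i,j} \to X_i$ satisfying $h^u_{i,j} \circ g^u_{i,j} = h'^u_{i,j} \circ g^u_{i,j}$, we get $\tilde F(h^u_{i,j}) \circ \tilde F(g^u_{i,j}) = \tilde F(h'^u_{i,j}) \circ \tilde F(g^u_{i,j})$; since $\tilde F(g^u_{i,j})$ is an isomorphism, in particular an epimorphism, we deduce $\tilde F(h^u_{i,j}) = \tilde F(h'^u_{i,j})$, and the coequaliser of a pair of equal maps is an isomorphism. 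Putting these together, each summand of the colimit defining $\tilde F(X_{i+1})$ is isomorphic to its coprojection from $\tilde F(X_i)$, so $\tilde F(\transemb_i)$ is an isomorphism.

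If the construction is transfinite beyond $\omega$, one handles limit ordinals by the same colimit-preservation argument: $\tilde F$ applied to a transfinite colimit of maps that are isomorphisms at each successor step yields a colimit of a diagram whose transition maps are all isomorphisms, which is again canonically isomorphic to $\tilde F$ applied to any earlier stage. Combining the successor step and the limit step, $\tilde F(\eta_X)$ is an isomorphism for every $X \in \ps C$.

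The main obstacle I expect is making the colimit-of-isomorphisms argument rigorous once $\tilde F$ is applied to the single combined diagram~\eqref{eq:step-Xi-diag}: one must be careful that the pushouts and coequalisers are glued along $X_i$ so that, after applying $\tilde F$, the resulting colimit is computed as a pushout/coequaliser diagram in $\orthocat P$ under $\tilde F(X_i)$, and that the induced map out of $\tilde F(X_i)$ is the one in question. This is essentially bookkeeping with colimit preservation, but it is where the proof must be done cleanly rather than hand-waved.
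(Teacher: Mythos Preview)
Your proposal is correct and follows essentially the same approach as the paper: use that $\tilde F$ preserves colimits to reduce to showing each $\tilde F(\transemb_i)$ is an isomorphism, then argue that in the image of diagram~\eqref{eq:step-Xi-diag} the maps $\tilde F(g^e_{i,j})$ are isomorphisms and $\tilde F(h^u_{i,j}) = \tilde F(h'^u_{i,j})$, so that $\tilde F X_i$ already is the colimit. The only differences are cosmetic: the paper verifies directly that $\tilde F X_i$ satisfies the universal property of the colimit of the image diagram (which is precisely the clean version of the bookkeeping you flag at the end), and the construction in the paper is indexed by $\N$ only, so your transfinite paragraph is unnecessary here.
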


\begin{proof}
  Let $X \in \ps C$. Since $\tilde F $ preserves colimits, we have the colimit
  \[
    \begin{tikzcd}[cramped,csbo=3.5em]
      &&&\tilde F (\orthocatemb \orthorefl X)&&&
      \\
      \tilde F  X_0
      \ar[rr,"\tilde F (\transemb_0)"{description}]
      \ar[rrru,"\tilde F (\eta_X)",dashed]
      &&
      \tilde F  X_1
      \ar[rr,"\tilde F (\transemb_1)"{description}]
      \ar[ru,dashed]
      &&
      \tilde F  X_2
      \ar[rr,"\tilde F (\transemb_2)"{description}]
      \ar[lu,dashed]
      &&
      \cdots
      \ar[lllu,dashed]
    \end{tikzcd}
  \]
  as image of the one of~\eqref{eq:reflection-def}. We now show that every
  $\tilde F (\transemb_i)$ is an isomorphism for every $i\in \N$, so that
  $\tilde F (\eta_X)$ is an isomorphism.

  So let $i \in \N$. Again, since $\tilde F $ preserves colimits, $\tilde F X_{i+1}$ is the
  colimit on the image of the diagram~\eqref{eq:step-Xi-diag} by $\tilde F $, that is,
  \begin{equation}
    \label{eq:step-HXi-diag}
    \begin{tikzcd}
      &
      \smash{\mathmakebox[0pt][r]{\raisebox{1.2em}{$\ddots$}}}\;\tilde F A^e_{i,j}\;\smash{\mathmakebox[0pt][l]{\raisebox{-1.2em}{$\ddots$}}}
      \ar[r,"\tilde F (g^e_{i,j})"]
      \ar[d,"\tilde F (f^e_{i,j})"{description}]
      &
      \tilde F B^e_{i,j}
      \\
      \smash{\mathmakebox[0pt][l]{\;\;\raisebox{1.2em}{$\vdots$}}}\smash{\mathmakebox[0pt][l]{\;\;\raisebox{-1.5em}{$\vdots$}}}\tilde F B^u_{i,j}
      \ar[r,"\tilde F (h^u_{i,j})",shift left]
      \ar[r,"\tilde F (h'^u_{i,j})"',shift right]
      &
      \tilde F X_i
    \end{tikzcd}
  \end{equation}
  with $\tilde F (\transemb_i)$ as the coprojection from $\tilde F X_i$ to
  $\tilde F X_{i+1}$. By hypothesis, $\tilde F (g^e_{i,j})$ is an isomorphism
  for every $j \in J^{i,e}$. Moreover, for every $j \in J^{i,u}$, we have
  $h^u_{i,j} \circ g^u_{i,j} = h'^u_{i,j} \circ g^u_{i,j}$. So that $\tilde F
  (h^u_{i,j}) \circ \tilde F (g^u_{i,j}) = \tilde F (h'^u_{i,j}) \circ \tilde F
  (g^u_{i,j})$, which amounts to $\tilde F (h^u_{i,j}) = \tilde F (h'^u_{i,j})$
  since $\tilde F (g^u_{i,j})$ is an isomorphism by hypothesis. Thus, a colimit
  on the diagram~\eqref{eq:step-HXi-diag} is given by $\tilde F X_i$: assuming a
  morphism $\tilde F X_i \to Y \in \orthocat P$, we are able to easily extend it
  to a unique cocone on~\eqref{eq:step-HXi-diag} since the $\tilde F
  (g^e_{i,j})$'s are isomorphisms and since the $\tilde F (h^u_{i,j})$'s are
  equal to the $\tilde F (h'^u_{i,j})$'s; conversely, every cocone with head $Y$
  includes in particular a morphism $\tilde F X_i \to Y$. Thus, $\tilde F X_i$
  and $\tilde F X_{i+1}$ are both colimits on~\eqref{eq:step-HXi-diag} so that
  $\tilde F (\transemb_i)$ is an isomorphism.
\end{proof}

\begin{theorem}
  \label{thm:criterion-la}
  Given two \lfp categories $\cC$ and $\cD$ modeled by two presheaf models $(C,
  O)$ and $(D, P)$, and a functor $\cF \co \cC \to \cD$ modeled by a Kan model
  $F \co C \to \ps D$, if the functor $\tilde F \co \ps C \to \orthocat P$ sends
  every $g \in O$ to an isomorphism in $\orthocat P$, then $\cF$ is a left
  adjoint.
\end{theorem}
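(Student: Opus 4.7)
The plan is to observe that the theorem is essentially the terminal stage of the chain of propositions already established in this section, so the proof should amount to composing them in the right order. First I would reduce the adjointness question to a colimit-preservation question: by Proposition~\ref{prop:lfp-charact-la}, since $\cC$ and $\cD$ are both \lfp, it suffices to show that $\cF$ (equivalently, $\bar F$, since $\cF \cong \bar F$, and equivalently $\tilde F'$ up to the equivalences $\cC \simeq \orthocat O$ and $\cD \simeq \orthocat P$) preserves all small colimits.

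Next I would feed the hypothesis into the chain of lemmas. The assumption gives us precisely the input to Proposition~\ref{prop:Ftildeeta-isom}: $\tilde F(g)$ is an isomorphism for every $g \in O$. The conclusion of that proposition is that $\tilde F(\eta_X)$ is an isomorphism for every $X \in \ps C$; in particular this holds for every $X$ of the form $\orthocatemb A$ with $A \in \orthocat O$, and more generally for any $\colim_i^{\ps C} \orthocatemb A_i$ arising from a small diagram in $\orthocat O$. This is exactly the hypothesis required by Proposition~\ref{prop:barF-pres-colim-iff-tildeFpetacolim-isom} (or equivalently by Proposition~\ref{prop:Feta-isom-impl-barF-pres-colimits}), which then delivers that $\tilde F'$ preserves colimits.

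Combining the two steps, $\bar F$ preserves small colimits, hence so does $\cF$ up to the equivalences identifying $\cC$ with $\orthocat O$ and $\cD$ with $\orthocat P$. Invoking Proposition~\ref{prop:lfp-charact-la} again then concludes that $\cF$ is a left adjoint.

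I do not expect any real obstacle here, since all the technical work already sits in the preceding lemmas: the colimit-preservation of $F'$ rests on the interchange of coends, tensors and colimits (Propositions~\ref{prop:Fp-pres-colimits} and~\ref{prop:otimes-pres-colimits}); that of $\tilde F$ follows by composition with the left adjoint $\orthorefl$ (Proposition~\ref{prop:tildeF-pres-colimits}); and the bridge between $\tilde F$ preserving colimits and $\tilde F'$ doing so passes through the transfinite construction of $\eta_X$ from Theorem~\ref{thm:ortho-adj}, already dissected in Propositions~\ref{prop:barF-pres-colim-iff-tildeFpetacolim-isom} and~\ref{prop:Ftildeeta-isom}. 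The only point that deserves a brief check when writing the proof is that the colimit of a diagram $A_{(-)} \co I \to \orthocat O$ is indeed computed as $\orthorefl(\colim_i^{\ps C} \orthocatemb A_i)$, which is what allows the application of Proposition~\ref{prop:Ftildeeta-isom} to reach arbitrary colimits in $\orthocat O$; but this was already noted inside the proof of Proposition~\ref{prop:barF-pres-colim-iff-tildeFpetacolim-isom}, so no new work is required.
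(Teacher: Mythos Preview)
Your proposal is correct and follows essentially the same route as the paper: reduce left-adjointness of $\cF$ to colimit preservation of $\tilde F'$ via Proposition~\ref{prop:lfp-charact-la}, then chain Proposition~\ref{prop:Ftildeeta-isom} into Proposition~\ref{prop:Feta-isom-impl-barF-pres-colimits}. The paper's own proof is merely a terser version of the same argument.
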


\begin{proof}
  It is enough to show that $\tilde F' \co \orthocat O \to \orthocat P$
  preserves colimits. By \Cref{prop:Feta-isom-impl-barF-pres-colimits}, we just
  need that $\tilde F(\eta_X)$ be an isomorphism for every $X \in \ps C$. The
  latter is then given by \Cref{prop:Ftildeeta-isom}.
\end{proof}
Based on the criterion, we are already able to handle situations where $P =
\emptyset$, since then $\orthocat P = \ps D$ and $\tilde F \cong F'$.
\begin{example}
  \label{ex:ob-functor-model-la}
  Considering the functor $\Ob \co \Cat \to \Set$ whose Kan model was given in
  \Cref{ex:ob-functor-model}, we can apply the criterion of
  \Cref{thm:criterion-la} to check whether this functor is a left adjoint. Since
  $\psmoset = \emptyset$, the reflection functor $\orthorefl \co
  \pswide{\psmcatset} \to \orthocat{(\psmoset)}$ is the identity, and verifying
  the criterion is even simpler: we just need to compute whether $\FOb' = \Lan
  \FOb$ sends every $g \in \psmocat$ to an isomorphism. We can compute that it
  is the case (see \Cref{sec:implementation}): $g^\cstp$ is sent to a bijection
  between sets of cardinality $3$, $g^\cstlu$ and $g^\cstru$ to bijections
  between sets of cardinality $2$, $g^\cstass$ to a bijection between sets of
  cardinality $4$.
\end{example}

\begin{example}[Non-example]
  \label{ex:prod-functor-not-criterion}
  We know that the product functor $(X,Y) \mapsto X\times Y$ of sets is not a
  left adjoint, since it does not preserve coproducts for example. The criterion
  of \Cref{ex:ob-functor-model} must thus not be satisfied for the Kan model
  $\Fprod$ of \Cref{ex:fm-product}, so that the only morphism $g^{\cstp}$ of
  $\psmosetset$ must not be sent to an isomorphism by $\tilde \Fprod$. We can
  compute again (see \Cref{sec:implementation}) that it is the case: $g^{\cstp}$
  is sent to a function between the empty set and a singleton set.
\end{example}

\subsection{Computing reflections, exhaustively}

Let $\cC$ be a category and $O$ be a finite set of morphisms of $\cC$. By what
we have exposed, we are interested in knowing when a morphism $m \co X \to Y \in
\cC$ is sent to an isomorphism by the reflection functor $\orthorefl \co \cC \to
\orthocat{(\cC,O)}$. We have different ways, or strategies, through which this
can be done, and that will determine the kind of computer aid we can get.

The first one might be called the \emph{exhaustive strategy}, which actually
computes $\orthorefl(m)$ completely: we first compute the reflection $\orthorefl
Y$, together with the morphism $\eta_Y \co Y \to \orthocatemb \orthorefl Y$. We
then compute $m' = \eta_Y \circ Y \co X \to \orthocatemb \orthorefl Y$ and
finally compute the liftings of $m'$ against the chain
\[
  \begin{tikzcd}[cramped,rsbo=2.5em,csbo=3em]
    &&&
    \orthocatemb \orthorefl Y
    &&&
    \\
    X_0
    \ar[rr]
    \ar[urrr,"m'"{description}]
    &&
    X_1
    \ar[rr]
    \ar[ur,dashed]
    &&
    X_2
    \ar[rr]
    \ar[ul,dashed]
    &&
    \cdots
    \ar[ulll,dashed]
  \end{tikzcd}
\]
so that we get $\orthocatemb\orthorefl(m) \co \orthocatemb\orthorefl X \to
\orthocatemb\orthorefl Y$ since $\orthocatemb\orthorefl X$ is the colimit of
the $X_i$'s. While this method is quite straight-forward, it has some problems.
First, it is infinite as it is. Indeed, to compute both $\orthocatemb\orthorefl
X$ and $\orthocatemb\orthorefl Y$, we need to do an infinite number of steps,
which are impossible to simulate on a computer with a finite time of execution.
But, this might not always be a problem. Indeed, when $\transemb_{i_e} \co
X_{i_e} \to X_{i_e+1}$ is an isomorphism for some $i_e \in \N$, then
$\transemb_{i} \co X_{i} \to X_{i+1}$ is an isomorphism for every $i \ge i_e$,
so that $\orthocatemb\orthorefl X \cong X_{i_e}$ and we can stop the computation
of $\orthocatemb\orthorefl X$ at the step $i_e$. Same for the computation of
$\orthocatemb\orthorefl Y$. However, even when the computation of the
reflections of $X$ and $Y$ can be done in a finite time, the exhaustive method
is very costly: at each step $i$, we must compute the sets $J^{i,e}$ and
$J^{i,u}$, which is computationally expensive.

\subsection{The game of reflection}

Better strategies can be found, which basically give up on completely computing
the reflection of morphisms. Instead, they try to win the following game, in as
few moves as possible. This \eq{game} consists of a class of configurations,
among which some of them are winning, and some moves that can be played to go
from a configuration to another.
\begin{definition}[Game of reflection] Let $\cC$ be a category with all small
  colimits and $O$ be a set of morphisms of $\cC$. A \emph{configuration} of the
  game is just a morphism $m \co X \to Y$ of $\cC$. Such a configuration $m$ is
  \emph{winning} when $m$ is an isomorphism. Starting from $m$, one can make a
  \emph{move} towards another configuration using the following authorised
  moves:
  \begin{itemize}
  \item \emph{domain-existential (Dom-E) move}: given $g \co A \to B \in O$ and $f \co A
    \to X$, and $h \co B \to Y$ such that $h \circ g = m \circ f$, we move to the
    configuration $m' \co X' \to Y$, obtained as the factorisation of $m$ and $h$
    through the pushout
    \begin{equation}
      \label{eq:dom-moves-pushout}
      \begin{tikzcd}[cramped,rsbo=2.5em]
        A
        \ar[r,"g"]
        \ar[d,"f"']
        &
        B
        \ar[d,dashed]
        \\
        X
        \ar[r,dashed,"p"']
        &
        X'
      \end{tikzcd}
    \end{equation}
  \item \emph{domain-unicity (Dom-U) move}: given $g \co A \to B \in O$ and $h,h' \co B
    \to X$ such that $h \circ g = h' \circ g$ and $m \circ h = m \circ h'$, we move to the
    configuration $m' \co X' \to Y$, obtained as the factorisation of $m$
    through the coequaliser
    \[
      \begin{tikzcd}
        B
        \ar[r,shift left,"h"]
        \ar[r,shift right,"h'"']
        &
        X
        \ar[r,dashed,"q"]
        &
        X'
      \end{tikzcd}
    \]
  \item \emph{codomain-existential (Cod-E) move}: given $g \co A \to B \in O$ and
    $f \co A \to Y$, we move to the configuration $m' \co X \to Y'$ with $m' = p
    \circ m$ where $p$ is the coprojection of the pushout
    \[
      \begin{tikzcd}[cramped,rsbo=2.5em]
        A
        \ar[r,"g"]
        \ar[d,"f"']
        &
        B
        \ar[d,dashed]
        \\
        Y
        \ar[r,dashed,"p"']
        &
        Y'
      \end{tikzcd}
    \]
  \item \emph{codomain-unicity (Cod-U) move}: given $g \co A \to B \in O$ and
    $h,h' \co B \to Y$ such that $h \circ g = h' \circ g$, we move to the
    configuration $m' \co X \to Y'$ with $m' = q \circ m$ where
    \[
      \begin{tikzcd}
        B
        \ar[r,shift left,"h"]
        \ar[r,shift right,"h'"']
        &
        Y
        \ar[r,dashed,"q"]
        &
        Y'\zbox.
      \end{tikzcd}
    \]
  \end{itemize}
\end{definition}

\begin{lemma}
  \label{lem:ortho-morphs-refl-are-isos}
  Given $g \co A \to B\in O$, $\orthorefl(g) \co \orthorefl A \to \orthorefl B$
  is an isomorphism.
\end{lemma}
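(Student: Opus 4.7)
The plan is to use the defining universal property of the reflection adjunction from \Cref{thm:ortho-adj} together with the very definition of orthogonality, and conclude via the Yoneda lemma applied inside $\orthocat O$. Concretely, I will show that for every $Y \in \orthocat O$ the precomposition map
\[
  (-) \circ \orthorefl(g) \co \Hom_{\orthocat O}(\orthorefl B, Y) \to \Hom_{\orthocat O}(\orthorefl A, Y)
\]
is a bijection, so $\orthorefl(g)$ is an isomorphism.

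First I would transport the situation across the adjunction $\orthorefl \dashv \orthocatemb$. The hom-set bijections
\[
  \Hom_{\orthocat O}(\orthorefl A, Y) \cong \Hom_{\cC}(A, \orthocatemb Y),
  \quad
  \Hom_{\orthocat O}(\orthorefl B, Y) \cong \Hom_{\cC}(B, \orthocatemb Y),
\]
are natural in $A$ and $B$, so precomposition by $\orthorefl(g)$ on the left corresponds exactly to precomposition by $g$ on the right. Thus it is enough to show that $(-)\circ g \co \Hom_{\cC}(B, \orthocatemb Y) \to \Hom_{\cC}(A, \orthocatemb Y)$ is a bijection for every $Y \in \orthocat O$.

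But this is precisely the definition of orthogonality: $\orthocatemb Y$ lies in $\orthocat O$, hence $\orthocatemb Y \ortho g$, which by \eqref{eq:orthogonal-prop} says that every $f \co A \to \orthocatemb Y$ factors uniquely through $g$. In other words, $(-)\circ g$ is a bijection, so the corresponding precomposition map $(-)\circ \orthorefl(g)$ is a bijection as well, naturally in $Y \in \orthocat O$.

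Finally, since $\Hom_{\orthocat O}(\orthorefl B, -) \to \Hom_{\orthocat O}(\orthorefl A, -)$ is a natural isomorphism, the Yoneda lemma (applied in $\orthocat O$) forces $\orthorefl(g) \co \orthorefl A \to \orthorefl B$ to be an isomorphism. There is no real obstacle here: the whole argument is a one-line consequence of the adjunction and the definition of $\orthocat O$; the only care needed is to be explicit that naturality of the adjunction isomorphism is what turns \emph{orthogonality of $\orthocatemb Y$ with respect to $g$} into \emph{invertibility of $\orthorefl(g)$ as seen by every $Y$}.
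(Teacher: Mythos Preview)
Your argument is correct and essentially identical to the paper's: both transport along the adjunction $\orthorefl \dashv \orthocatemb$, use orthogonality of $\orthocatemb Y$ to $g$ to see that $(-)\circ g$ is a bijection, and conclude by Yoneda. The only cosmetic difference is that you invoke naturality of the adjunction bijection in the source variable to identify the resulting isomorphism with $\orthorefl(g)$, whereas the paper does the same identification by chasing $\unit{\orthorefl B}$ through the chain of bijections.
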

\begin{proof}
  Let $Z \in \orthocat {(\cC,O)}$. We have the isomorphisms
  \[
    \orthocat O(\orthorefl B,Z)
    \cong
    \cC(B,\orthocatemb Z)
    \stackrel{- \circ g}{\cong}
    \cC(A,\orthocatemb Z)
    \cong
    \orthocat O(\orthorefl A,Z)
  \]
  so that, by Yoneda lemma, $\orthorefl A \cong \orthorefl B$. Moreover, the
  isomorphism is given as the image of $\unit{\orthorefl B} \in \orthocat
  O(\orthorefl B,\orthorefl B)$ in $\orthocat O(\orthorefl A,\orthorefl B)$
  through the above chain of isomorphisms. It can be checked that it is
  $\orthorefl(g)$.
\end{proof}

\begin{proposition}
  Let $m \co X \to Y \in \cC$ be a morphism seen as a configuration of the above
  game, and let $m'$ be a morphism be obtained after playing one of the
  authorised moves. Then, $\orthorefl(m)$ is an isomorphism if and only if
  $\orthorefl(m')$ is an isomorphism.
\end{proposition}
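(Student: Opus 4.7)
The plan is to use two facts uniformly across the four cases: the reflection functor $\orthorefl$ preserves colimits as a left adjoint (\Cref{thm:ortho-adj}), and $\orthorefl(g)$ is an isomorphism for every $g \in O$ (\Cref{lem:ortho-morphs-refl-are-isos}). For each move, I will exhibit a morphism (either $p \co X \to X'$, $q \co X \to X'$, $p \co Y \to Y'$, or $q \co Y \to Y'$) whose image by $\orthorefl$ is an isomorphism and through which $m$ and $m'$ are related by composition; the equivalence of ``$\orthorefl(m)$ is an iso'' and ``$\orthorefl(m')$ is an iso'' then follows by the two-out-of-three property for isomorphisms in a composite.

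For the Dom-E move, applying $\orthorefl$ to the pushout~\eqref{eq:dom-moves-pushout} yields a pushout in $\orthocat O$. Since $\orthorefl(g)$ is an isomorphism and pushout along an isomorphism is again an isomorphism, $\orthorefl(p)$ is an iso. The defining identity $m' \circ p = m$ gives $\orthorefl(m') \circ \orthorefl(p) = \orthorefl(m)$, from which the equivalence follows. For Dom-U, $\orthorefl$ sends the coequaliser of $h,h'$ to the coequaliser of $\orthorefl(h),\orthorefl(h')$; from $h \circ g = h' \circ g$ we get $\orthorefl(h) \circ \orthorefl(g) = \orthorefl(h') \circ \orthorefl(g)$, and since $\orthorefl(g)$ is an iso, we can cancel it to obtain $\orthorefl(h) = \orthorefl(h')$. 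The coequaliser of two equal morphisms being an isomorphism, $\orthorefl(q)$ is an iso, and combined with the factorisation $m = m' \circ q$ we conclude.

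The two codomain moves are entirely symmetric. For Cod-E, $\orthorefl(p)$ is an isomorphism by the same pushout argument, and the relation $m' = p \circ m$ gives $\orthorefl(m') = \orthorefl(p) \circ \orthorefl(m)$. For Cod-U, the same cancellation argument shows $\orthorefl(h) = \orthorefl(h')$, so $\orthorefl(q)$ is an iso, and $m' = q \circ m$ concludes. There is essentially no obstacle: the four moves were designed precisely so that the morphisms they introduce are exactly those that $\orthorefl$ collapses to isomorphisms; the only points that require a moment's attention are the standard facts that pushout along an iso is an iso and that the coequaliser of two equal maps is an iso.
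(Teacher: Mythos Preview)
Your proof is correct and follows essentially the same approach as the paper: both rely on $\orthorefl$ preserving colimits and on \Cref{lem:ortho-morphs-refl-are-isos}, then use that the connecting map ($p$ or $q$) becomes an isomorphism after reflection. The paper only spells out the Dom-E case and declares the others ``very similar,'' whereas you make explicit the slightly different mechanism in the unicity cases (cancelling the iso $\orthorefl(g)$ and using that a coequaliser of equal maps is an iso), which is a welcome clarification but not a different strategy.
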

\begin{proof}
  It is enough to do this for Dom-E moves, since the other cases are very
  similar. Thus, let's consider the play of a Dom-E move from a morphism $m \co
  X \to Y$:
  \[
    \begin{tikzcd}[cramped,rsbo=2.5em]
      A
      \ar[r,"g"]
      \ar[d,"f"']
      &
      B
      \ar[d,dashed]
      \\
      X
      \ar[r,dashed,"p"{description}]
      \ar[d,"m"']
      &
      X'
      \ar[ld,"m'"]
      \\
      Y
    \end{tikzcd}
  \]
  To conclude, we just need to show that $\orthorefl(p)$ is an isomorphism. But,
  since $\orthorefl$ preserves pushouts, it is a consequence of the fact that
  $\orthorefl(g)$ is an isomorphism by \Cref{lem:ortho-morphs-refl-are-isos}.
\end{proof}

Thus, to show that some $m \co X \to Y$ is sent to an isomorphism through
$\orthorefl$, it is enough to play the game of reflection, starting from $m$ and
playing zero, one or several moves, until we hit a winning configuration, that
is, an isomorphism in $\cC$. This can take the form of an automated method as a
\emph{heuristic strategy}, where the computer tries to play this game itself,
trying to apply the moves efficiently to hit a winning configuration as soon as
possible. Or it can take the form of a manual method, where the user plays
themselves the game. A computer can still be useful in this last method to check
that the sequence of moves is correct.

\begin{example}
  \label{ex:cat-prod-with-2}
  Consider the Kan model of \Cref{ex:times-functor-model} in the case where
  $\cC = \Cat$, $(C,O) = (\psmcatcat,\psmocat)$ and $B = \mathbf{2}$, the
  category with two objects and only one non-trivial \eq{walking arrow} between
  them, and $Y$ is an object of $\pswide \psmcatcat$ corresponding to
  $\mathbf{2}$. In order to prove that $-\times \mathbf{2}$ is a left adjoint,
  we just need to show that $\tilde \Ftimes$ sends each of $g^\cstp$,
  $g^\cstlu$, $g^\cstru$, $g^\cstass$ to isomorphisms. Concerning $g^\cstlu$,
  since $\Ftimes'(g^\cstlu)$ is essentially $g^\cstlu \times Y \co
  A^\cstlu\times Y \to B^\cstlu \times Y \in \pswide \psmcatcat$, we can observe
  (and compute!) that $\Ftimes'(g^\cstlu)$ fails to be an isomorphism because
  three elements of $B^\cstlu(\cstm)$ have two antecedents each: this is simply
  three copies (because $Y(\cstm)$ has three elements) of the situation observed
  on $g^\cstlu$ alone, where $u$ and $w$ in $A^\cstlu$ are both sent to $u' \in
  B^\cstlu$. In order, to make $\Ftimes'(g^\cstlu)$ into an isomorphism, we just
  need to apply three Dom-E moves with $g^\cstlu$ itself to unify each copy of
  the pair $u,w$ in $A^\cstlu\times Y$. The cases of $g^\cstru$, $g^\cstass$ and
  $g^\cstp$ are similar.
\end{example}

\subsection{A criterion for cartesian closure}

From our criterion for left-adjointness, a criterion for cartesian closure can
be derived. Recall that a category with finite products $\cC$ is \emph{cartesian
  closed} when for all objects $A,B \in \cC$, there is an object $B^A$ and a
morphism $\ev_{A,B} \co A \times B^A \to B$ satisfying the following universal
property: given an object $X \in \cC$ and a morphism $f \co A \times X \to B$,
there is a unique morphism $\bar f \co X \to B^A$ such that $f = \ev_{A,B} \circ
(A \times \bar f)$. A standard characterisation of cartesian closure, which is
essentially a reformulation of the above definition, is
\begin{proposition}
  \label{prop:charact-closure}
  A category with finite products $\cC$ is cartesian closed iff for every $A \in
  \cC$, the functor $A \times (-) \co \cC \to \cC$ is a left adjoint.
\end{proposition}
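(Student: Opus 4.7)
The plan is to prove both directions by unpacking the definitions of cartesian closure and of a right adjoint, using the standard characterisation of adjunctions via a universal arrow at each object.

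For the forward direction, assume $\cC$ is cartesian closed and fix $A \in \cC$. I would define a functor $R_A \co \cC \to \cC$ on objects by $R_A(B) = B^A$; on morphisms $f \co B \to B'$, I would define $R_A(f) \co B^A \to (B')^A$ as the unique morphism satisfying $f \circ \ev_{A,B} = \ev_{A,B'} \circ (A \times R_A(f))$, which exists by applying the universal property of $\ev_{A,B'}$ to the morphism $f \circ \ev_{A,B} \co A \times B^A \to B'$. Functoriality (preservation of identities and composition) follows from the uniqueness part of the universal property. The universal property then gives directly a bijection $\cC(A \times X, B) \cong \cC(X, B^A)$ sending $g$ to its transpose $\bar g$; naturality in $X$ and $B$ again reduces to the uniqueness of transposes, so this bijection witnesses the adjunction $A \times (-) \dashv R_A$.

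For the backward direction, assume $A \times (-)$ has a right adjoint $R_A$, with counit $\eps \co A \times R_A(-) \To \unit{\cC}$. I would set $B^A \defeq R_A(B)$ and $\ev_{A,B} \defeq \eps_B \co A \times R_A(B) \to B$. The universal property of the counit states precisely that for every $X$ and every $f \co A \times X \to B$, there is a unique $\bar f \co X \to R_A(B)$ with $f = \eps_B \circ (A \times \bar f)$, which is verbatim the universal property required for cartesian closure.

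I do not expect a serious obstacle: the whole statement is essentially a translation between the ``hom-set with a universal arrow'' formulation of an adjunction and the elementary formulation of exponentials. The only mildly delicate point is verifying functoriality of $B \mapsto B^A$ in the forward direction, but that is immediate from uniqueness of transposes. No further computation beyond definition-chasing is needed.
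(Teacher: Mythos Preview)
Your proposal is correct and is exactly the standard definition-chase one would expect. The paper itself does not give a proof of this proposition: it introduces it as ``a standard characterisation of cartesian closure, which is essentially a reformulation of the above definition'', and leaves it at that. Your argument is precisely that reformulation made explicit, so there is nothing to compare.
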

In the case of \lfp categories modeled by presheaf models, we can already use
the criterion of \Cref{thm:criterion-la} to prove that the functors $A \times
(-) \co \cC \to \cC$ are left adjoints. But this gives us an infinite number of
instances to check, which cannot be done by a computer. However, a presheaf
model $(C,O)$ for $\cC$ allows us to reduce the number of instances to check to
one instance per object of $C$:
\begin{theorem}
  Given an \lfp category $\cC$ modeled by a presheaf model $(C,O)$, $\cC$ is
  cartesian closed if for every $c \in \Ob(C)$, the functor $\orthorefl(\yoneda_{C}(c)
  \times \orthocatemb (-)) \co \orthocat {(\ps C,O)} \to \orthocat {(\ps C,O)}$
  is a left adjoint.
\end{theorem}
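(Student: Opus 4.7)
The plan is to reduce cartesian closure to preservation of colimits by each product functor $A \times (-)$, and then to express that functor via the density formula of \Cref{prop:density-for-product} as a coend built out of the hypothesised left adjoints.

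First I would invoke \Cref{prop:charact-closure} to reduce the goal to showing that for every $A \in \orthocat{(\ps C,O)}$, the functor $A \times (-) \co \orthocat O \to \orthocat O$ is a left adjoint. Since $\orthocat O$ is \lfp, \Cref{prop:lfp-charact-la} reduces this further to showing that $A \times (-)$ preserves all small colimits. Note that products in $\orthocat O$ agree with those in $\cC$ under the equivalence $\cC \simeq \orthocat O$, since both equivalences and reflective inclusions preserve limits, so nothing is lost in passing to $\orthocat O$.

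Now fix $A$. Using the commutativity of the product together with \Cref{prop:density-for-product}, we obtain a natural isomorphism
\[
  A \times B \qp\cong \coend^{c \in C} \orthorefl(\yoneda_{C}(c) \times \orthocatemb B) \otimes (\orthocatemb A)(c)
\]
in $B$. Writing $T_c(B) = \orthorefl(\yoneda_{C}(c) \times \orthocatemb B)$, the hypothesis says that each $T_c$ is a left adjoint, hence preserves colimits. Tensoring with the fixed set $(\orthocatemb A)(c)$ preserves colimits by \Cref{prop:otimes-pres-colimits}. Finally, coends are expressible as colimits (\Cref{rem:coend-as-quotient}) and so commute with colimits taken in the variable $B$. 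Combining these three facts, the functor $B \mapsto \coend^{c} T_c(B) \otimes (\orthocatemb A)(c)$ preserves small colimits, whence so does $A \times (-)$.

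There is no serious obstacle, since the hypothesis is crafted precisely to feed into \Cref{prop:density-for-product}. The main points of care are bookkeeping: one must use the \emph{second} form of the density formula (with $\orthorefl$ pushed inside the coend) in order for the functors $T_c$ to appear explicitly, and one must check that the dependence on $B$ is isolated inside the $T_c$ while $(\orthocatemb A)(c)$ stays fixed, so that the argument about preservation of colimits in $B$ goes through cleanly.
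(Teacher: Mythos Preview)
Your proposal is correct and follows essentially the same approach as the paper: reduce via \Cref{prop:charact-closure} (and \Cref{prop:lfp-charact-la}) to colimit preservation of $A\times(-)$, then read this off from the coend formula of \Cref{prop:density-for-product} together with colimit preservation of the $T_c$'s, of tensoring, and of coends. The only difference is that the paper is terser and leaves the use of \Cref{prop:lfp-charact-la} implicit; your appeal to commutativity of the product is harmless but unnecessary, since the formula already has $B$ in the right slot.
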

\begin{proof}
  By \Cref{prop:charact-closure}, we just need to show that, given $A \in
  \orthocat O \simeq \cC$, the functor $A \times (-) \co \orthocat O \to
  \orthocat O$ preserves colimits. By \Cref{prop:density-for-product}, and the
  fact that the functors $(-) \otimes (\orthocatemb A)(c)$ and the coend
  construction adequately preserves colimits, this is a consequence of the
  hypothesis.
\end{proof}
\noindent Then our criterion for left adjointness applies, since:
\begin{proposition}
  Given a presheaf model $(C,O)$ and $c \in C$, the functor
  $\orthorefl(\yoneda_{C}(c) \times \orthocatemb (-)) \co \orthocat {(\ps C,O)}
  \to \orthocat {(\ps C,O)}$ can be modeled by a Kan model.
\end{proposition}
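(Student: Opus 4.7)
The plan is to exhibit an explicit Kan model $F_c \co C \to \ps C$ whose induced functor $\tilde F_c'$ is naturally isomorphic to $\orthorefl(\yoneda_{C}(c) \times \orthocatemb (-))$. The natural guess, given the form of the target functor and the density formula, is to define
\[
  F_c(c') \qdef \yoneda_{C}(c) \times \yoneda_{C}(c')
\]
with functorial action in $c'$ obtained by applying $\yoneda_{C}(c) \times (-)$ to the Yoneda functor. I would first check this assignment is genuinely functorial (immediate from functoriality of $\yoneda_C$ and of the product), so that $F_c$ qualifies as a Kan model.

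The main computation is to verify that $F_c' = \Lan F_c \co \ps C \to \ps C$ is naturally isomorphic to $\yoneda_{C}(c) \times (-)$. Unfolding \Cref{def:kan-extension}, for $X \in \ps C$ we have
\[
  F_c'(X) \qp= \int^{c' \in C} (\yoneda_{C}(c) \times \yoneda_{C}(c')) \otimes X(c')\zbox.
\]
From here I would chain three natural isomorphisms: first, pull the tensor inside the product using $(Y \times Z) \otimes S \cong Y \times (Z \otimes S)$, which holds because $(-) \otimes S$ is a coproduct and products distribute over coproducts in the cartesian closed category $\ps C$ (\Cref{prop:pscat-props}); second, pull $\yoneda_{C}(c) \times (-)$ out of the coend, using that $\yoneda_{C}(c) \times (-)$ preserves colimits (again by cartesian closure of $\ps C$) and hence coends, which can be expressed as colimits (\Cref{rem:coend-as-quotient}); third, apply the density formula (\Cref{prop:density-formula}) to collapse $\int^{c' \in C} \yoneda_{C}(c') \otimes X(c') \cong X$. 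Put together, this gives $F_c'(X) \cong \yoneda_{C}(c) \times X$, naturally in $X$.

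It then remains to chase this isomorphism through the definition of $\tilde F_c'$: postcomposing with $\orthorefl$ gives $\tilde F_c = \orthorefl \circ F_c' \cong \orthorefl(\yoneda_{C}(c) \times (-))$, and precomposing with $\orthocatemb$ yields the desired $\tilde F_c' \cong \orthorefl(\yoneda_{C}(c) \times \orthocatemb (-))$. I expect the only real subtlety to be bookkeeping the naturality in $c'$ of the isomorphism $(\yoneda_{C}(c) \times \yoneda_{C}(c')) \otimes X(c') \cong \yoneda_{C}(c) \times (\yoneda_{C}(c') \otimes X(c'))$, so that the chain of isomorphisms truly happens inside the coend and not merely pointwise; this is however a direct consequence of the fact that $\yoneda_{C}(c) \times (-)$, as a left adjoint, commutes with the cowedge structure. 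Everything else is routine.
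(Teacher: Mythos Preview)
Your proposal is correct and follows essentially the same approach as the paper: both take the Kan model $F_c(-) = \yoneda_C(c) \times \yoneda_C(-)$ and use the density formula together with the cartesian closure of $\ps C$ (so that $\yoneda_C(c)\times(-)$ preserves colimits, hence coends and tensors) to identify $\tilde F_c'$ with $\orthorefl(\yoneda_C(c)\times \orthocatemb(-))$. The only cosmetic difference is that the paper runs the computation starting from $\orthorefl(\yoneda_C(c)\times \orthocatemb B)$ and unwinds via the density formula, whereas you start from $F_c'(X)$ and collapse the coend; these are the same chain of isomorphisms read in opposite directions.
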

\begin{proof}
  Given $B \in \orthocat O$, by the density formula
  (\Cref{prop:density-formula}), we have
  \[
    \begin{aligned}
      \orthorefl(\yoneda_{C}(c) \times \orthocatemb B)
      &
      \cong
      \orthorefl(\yoneda_{C}(c) \times \int^{d \in C} \yoneda_C(d) \otimes \orthocatemb B(d))
        \\
      &\cong
      \orthorefl(\int^{d \in C} (\yoneda_{C}(c) \times \yoneda_C(d)) \otimes \orthocatemb B(d))
    \end{aligned}
  \]
  since the functor $\yoneda_{C}(c) \times (-)$ commutes with colimits by the
  cartesian closure of $\ps C$. Thus, an adequate Kan model for the functor of the
  statement is given by $F \co C \to \ps C$ defined by $F(-) = \yoneda_C(c)
  \times \yoneda_C(-)$.
\end{proof}

\section{An implementation}

\label{sec:implementation}

We now discuss how one can implement the methods we introduced, which rely on
computations on presheaves and presheaf morphisms. One of the main concern for
such an implementation, in addition to the actual used algorithms, is the one of
\emph{encoding}, that is, of the representation the instances of structures we
consider as finite elements of some datatypes. Under adequate finiteness
conditions, both encodings and computations for our structures can be derived
from the ones on finite sets and finite functions, which can be easily
represented.

Since our prototype implementation is written in \ocaml, our explanations will
be illustrated by syntax and types close to the ones of this language.

\subsection{Encoding finite sets}

A crucial point to note about our methods we introduced is that all
constructions we need (left Kan extension, pushouts, coequalisers, \etc) are
defined up to isomorphism. Similarly, the arguments of these constructions can
be given up to isomorphism.

Thus, in order to build an implementation of our methods on computations on
finite sets, it is enough to represent sets up to isomorphism. But a finite set
$S$ is, up to isomorphism, a finite subset of $S' \subset \N$ or, more
generally, any finite subset of inhabitants of some infinite datatype. In this
regard, we define the type \texttt{sgen = SGen of int} to encode the elements of
the sets we consider (we gloss over the fact that \texttt{int} is technically a
datatype with finite number of inhabitants since it is \eq{big enough} in
practice; the concerned reader can replace \texttt{int} by a type for \emph{big
  integers}). The \ocaml standard library then provides a module \texttt{Set} to
represent sets of element of some type that need to be specified in advance. We
instantiate this module for the type \texttt{sgen} to obtain a module
\texttt{SSet}. The finite sets will then be encoded by inhabitants of the type
\texttt{set = SSet.t}.

Then, one needs to represent the functions $f$ between two sets $A$ and $B$
encoded by two terms \ocamltoinline{a} and \ocamltoinline{b} of type
\ocamltoinline{set}. One natural idea is to represent them as \ocaml functions
\ocamltoinline{f : sgen -> sgen} which would only be defined for the elements of
type \ocamltoinline{sgen} that belongs to \ocamltoinline{a}, and would return an
element of type \ocamltoinline{sgen} that belongs to \ocamltoinline{b}. However,
we want to incrementally construct such functions and to be able to inspect
them, which would be inefficient using \ocaml functions. Instead, we use the
\ocamltoinline{Map} module which provides an efficient datastructure to
represent a set of (key,value) bindings, that will use to represent the graph
$\set{(a,f(a)) \mid a \in A}$ of the functions $f$ we consider. We instantiate
this module for the keys of type \texttt{sgen} to obtain a module \texttt{SMap}.
The functions between two finite sets are then encoded by inhabitants of the
type \texttt{sfun = sgen SMap.t}.

\subsection{Encoding finite categories}

We can now encode finite categories, that is, categories with a finite number of
objects and morphisms. Such finite categories can be encoded by the record
\begin{verbatim}
type cat = {
  objs : set; morphs : set;
  src : sgen -> sgen; tgt : sgen -> sgen;
  id : sgen -> sgen; comp : sgen -> sgen -> sgen}
\end{verbatim}
Here, we prefer to use the \ocaml function type to represent functions, since we
will not need to inspect or change the operations of a category $C$ once it is
introduced. The operation \texttt{id} will only be defined for those
\texttt{sgen} that are members of \texttt{objs}, the operation \texttt{src} will
only defined for those \texttt{sgen} that are members of \texttt{morphs},
\etcend From these encodings, one can easily write a procedure to check whether
the unitality and associativity axioms of categories are satisfied.

\subsection{Encoding presheaves}

Now that we have encodings for finite sets and finite categories, one can
define an encoding for the finite presheaves over a finite category $C$. It will
be a record
\begin{verbatim}
type ps = { ps_sets : set SMap.t ;
            ps_maps : sfun SMap.t }
\end{verbatim}
where \texttt{ps\_sets} encodes the map $c \in \Ob(C) \mapsto X(c)$ of a finite
presheaf $X$ (the keys of the \texttt{SMap.t} are encoding the objects of $C$),
and \texttt{ps\_maps} encodes the map $(f \co c \to c' \in C) \mapsto (X(f) \co
X(c') \to X(c))$ (the keys of the \texttt{SMap.t} are encoding the morphisms of
$C$).

We can now encode morphisms between two presheaves $X$ and $Y$ using the type
\begin{verbatim}
type ps_morph = sfun SMap.t
\end{verbatim}
where the keys of the \texttt{SMap.t} are encoding the objects of $C$, and the
value of this \texttt{SMap.t} are the encoding of the functions $X(c) \to Y(c)$
for $c \in \Ob(C)$. We can easily write some procedure to check that an
inhabitant of \texttt{ps\_morph} is indeed a correct encoding of a morphism
between $X$ and $Y$.

Using this encoding, we will be able to encode the \emph{finite} presheaf
models:
\begin{definition}
  A presheaf model $(C,O)$ is \emph{finite} when $C$ is a finite category and
  $O$ is a finite set of morphisms $f \co A \to B$ where both $A$ and $B$ are
  finite.
\end{definition}
Indeed, we can use for example the record:
\begin{verbatim}
type ps_model = {
  psm_cat : cat ;
  psm_omorphs : (ps * ps * ps_morph) list }
\end{verbatim}
where \texttt{psm\_cat} encodes a category $C$ and \texttt{psm\_omorphs} encodes a
list of triples $(A \in \ps C, B \in \ps C, f \co A \to B \in \ps C)$, that
encodes the set $O$ of orthogonality conditions of a presheaf model $(C,O)$.

Finite Kan models (for an adequate notion of \emph{finiteness}) can be encoded
using similar ideas.

\subsection{Computing the operations}
\label{sec:computing-operations-brief}

With the above encodings, one can write the code for the different operations we
need. First we can derive a procedure for computing colimits of finite
presheaves from a procedure to compute the colimits of finite sets. Indeed, one
can easily compute a finite coproduct of finite sets, and coequalisers of finite
sets, which is all that is required to compute more general finite colimits of
finite sets. Then, we can implement a procedure to compute the action of the
functor $F' \co \ps C \to \ps D$ from an encoded Kan model $F \co C \to \ps D$.
Indeed, for a given finite presheaf $X$, $F'(X)$ is just an adequate finite
colimit of finite presheaves which can be computed. Finally, we can write the
procedures for the operations needed to find and play the moves of the game of
reflection. See \Cref{sec:details-implementation} for details.

\subsection{Code for the examples}

We can now consider the code for the examples of this paper. For technical and
efficiency reasons, the actual datatypes used that we use are different from the
ones that we just discussed. One of the main difference is that we prefer to
encode finite categories using finite presentations, since it felt easier
describe examples this way, than to specify their whole set of morphisms. The
files \texttt{setCat.ml}, \texttt{categoryCat.ml}, \texttt{pairCat.ml} encodes
the presheaf models for $\Set$ as in \Cref{ex:pm-set}, $\Set \times \Set$ as in
\Cref{ex:pm-set-times-set}, and $\Cat$ as in \Cref{ex:pm-cat} respectively.

Then, the files \texttt{truncation\_functor.ml}, \texttt{pair\_functor.ml} and
\texttt{product\_w\_C1\_functor.ml} encode the Kan models of the functors of
\Cref{ex:ob-functor-model,ex:fm-product,ex:cat-prod-with-2} and compute the
criterion for left adjointness using the exhaustive search for the reflection
procedure. While the two first examples terminate, and are able to give us
information discussed in
\Cref{ex:ob-functor-model-la,ex:prod-functor-not-criterion} about the computed
reflections, the computation does not terminate in reasonable time for the last
file. Instead, we can manually play the game of reflection and show that we can
win the game in the cases of $g^\cstlu$, $g^\cstru$ and $g^\cstass$ in few
steps. The case of $g^\cstp$ cannot be handled by our current implementation,
since it does not allow us to choose which specific instance of a move to play.
Instead, we can only instruct our program to execute all the moves of a certain
type, which results in a looping behavior in the case of $g^\cstp$. This
alternative computation is carried out in
\texttt{product\_w\_C1\_functor\_unitl\_assoc\_only.ml}.

\clearpage

\printbibliography

\clearpage

\appendix

\subsection{Essentially algebraic theories}

\label{sec:eat}

Here, we provide some details about the notion of \emph{essentially algebraic
  theories}, which provides a characterisation of \lfp categories. It is closely
related to the notion of generalised algebraic theories, which are basically an
extension of the notion of essentially algebraic theories which allows for
dependent typing in the definition of operations. We refer the reader to
standard references like \cite[Section 3.D]{adamek1994locally} for a longer
presentation.
\begin{definition}
  Given a set~$S$, an \emph{$S$\sorted signature} is the data of a set~$\Sigma$
  of \emph{symbols}
  \[
    \sigma \co s_1 \times \cdots \times s_n \to s
  \]
  where $s_1 \times \cdots \times s_n$ is the \emph{input arity}, with each
  $s_i$ in $S$, and where $s \in S$ is the \emph{output sort}. From such a
  signature and a \emph{context} $\Gamma = (x_1 : s_1, \ldots, x_k : s_k)$ of
  distinct variables typed by sorts $s_i \in S$, one can build \emph{terms} the
  expected way, as trees whose inner nodes are made of symbols of $\Sigma$, and
  whose leaves are variables $x_i : s_i \in \Gamma$, all respecting arities and
  sorts. The \emph{output sort} of such a term is the output sort of the root
  node, or, if there are no nodes, the sort of the only variable. We write
  $\freeterms\Sigma(\Gamma)$ for this set of terms.
\end{definition}

\begin{definition}
  An \emph{essentially algebraic theory} is a tuple
  \[
    \stdtheory = (S,\Sigma,E,\Sigma_t,\Def)
  \]
  where
  $S$ is a set,
  $\Sigma$ is an $S$\sorted signature,
  $E$ is a set of triples $(\Gamma,t_1,t_2)$ where~$t_1,t_2$ are terms of
    $\freeterms\Sigma(\Gamma)$ with the same output sort,
  $\Sigma_t$ is a subset of~$\Sigma$,
  $\Def$ is a function which maps~$\sigma \co s_1 \times \cdots \times s_n
    \to s\in \Sigma\setminus\Sigma_t$ to a set of equations~$t_1 = t_2$
    where~$t_1,t_2$ are terms of $\freeterms\Sigma_t(x_1 : s_1, \ldots, x_k :
    s_k)$.
\end{definition}
In the above definition, the set~$S$ represents the different \emph{sorts} of
the theory, the set~$\Sigma$ the different operations that appear in the theory,
the set~$E$ the global equations satisfied by the theory, the set~$\Sigma_t$ the
operations whose domains are total, and the function~$\Def$ the equations that
define the domains of the partial operations.

\begin{definition}
  Given such an essentially algebraic theory~$\stdtheory$, a \emph{premodel
    of~$\stdtheory$}, or~\emph{$\stdtheory$\premodel}, is the data of
  \begin{itemize}
  \item for all~$s \in S$, a set~$M_s$,
    
  \item for all~$\sigma \co s_1 \times \cdots \times s_n \to s \in \Sigma_t$, a function
    \[
      M_\sigma \co M_{s_1} \times \cdots \times M_{s_n} \to M_s\zbox,
    \]
    
  \item for all~$\sigma \co s_1 \times \cdots \times s_n \to s \in
    \Sigma\setminus\Sigma_t$, a partial function
    \[
      M_\sigma \co M_{s_1} \times \cdots \times M_{s_n} \to M_s\zbox.
    \]
  \end{itemize}
  Given such a premodel, a context $\Gamma = (x_1:s_1,\ldots,x_k:s_k)$, a term
  $t \in \freeterms\Sigma(\Gamma)$ of output sort $s$, and $\bar y =
  (y_1,\ldots,y_n) \in M_{s_1} \times \cdots \times M_{s_n}$, the
  \emph{interpretation} $\eateval{t}_{\bar y}$ is either not defined or an
  element of $M_s$ according to the following rules:
  \begin{itemize}
  \item if~$t = x_i$ for some~$i \in \set{1,\ldots,k}$, then $\eateval t_{\bar
      y}$ is defined and
    \[
      \eateval t_{\bar y} = y_i\zbox,
    \]
    
  \item if~$t = \sigma (t_1,\ldots,t_n)$ where $n$ is the arity of $\sigma$, and
    terms $t_1,\ldots,t_n \in \freeterms\Sigma(\Gamma)$, then~$\eateval t_{\bar y}$ is
    defined if and only if $\eateval {t_1}_{\bar y},\ldots,\eateval {t_n}_{\bar
      y}$ are defined and $M_\sigma$ is defined at~$\eateval {t_1}_{\bar
      y},\ldots,\eateval {t_n}_{\bar y}$ and, in this case,
    \[
      \eateval t_{\bar y} =
      M_\sigma(\eateval {t_1}_{\bar y},\ldots,\eateval {t_n}_{\bar y})\zbox.
    \]
  \end{itemize}
  A premodel of~$\stdtheory$ is then a \emph{model of $\stdtheory$}, or
  \emph{$\stdtheory$\model}, when moreover
  \begin{itemize}
  \item for all~$\sigma \co s_1 \times \cdots \times s_n \to s \in
    \Sigma\setminus\Sigma_t$, $M_\sigma$ is defined at $\bar y =
    (y_1,\ldots,y_n) $ in $M_{s_1} \times \cdots \times M_{s_n}$ if and only if,
    for all~$(t_1,t_2) \in \Def(\sigma)$, we have~$\eateval{t_1}_{\bar y} =
    \eateval{t_2}_{\bar y}$,
  \item for every triple~$(\Gamma,t_1,t_2) \in E$ where~$\Gamma =
    (x_1:s_1,\ldots,x_n:s_n)$, given a tuple~$\bar y = (y_1,\ldots,y_n) \in
    M_{s_1} \times \cdots \times M_{s_n}$, if both~$\eateval {t_1}_{\bar y}$
    and~$\eateval {t_2}_{\bar y}$ are defined, then~$\eateval {t_1}_{\bar y} =
    \eateval {t_2}_{\bar y}$.
  \end{itemize}
  A \emph{morphism} between two models $M$ and $M'$ is a family of functions
  \[
    F = (F_s \co M_s \to M'_s)_{s \in S}
  \]
  such that, for every $\sigma \co s_1 \times \cdots \times s_n \to s \in
  \Sigma$, for every $\bar y = (y_1,\ldots,y_n)$ in $M_{s_1} \times \cdots
  \times M_{s_n}$ such that $M_\sigma(y_1,\ldots,y_n)$ is defined, we have that
  the element $M'_\sigma(F_{s_1}(y_1),\ldots,F_{s_n}(y_n))$ is defined and
  \[
    F_s(M_\sigma(y_1,\ldots,y_n)) =
    M'_\sigma(F_{s_1}(y_1),\ldots,F_{s_n}(y_n))\zbox.
  \]
  We write $\Mod(\stdtheory)$ for the category of $\stdtheory$\models and
  morphisms.
\end{definition}
Essentially algebraic theories allows one to easily recognise a lot of
categories as \lfp using the following property:
\begin{theoremapp}[{\cite[Theorem 3.36]{adamek1994locally}}]
  \label{thm:lfp-cats-as-eat-models}
  A category $\cC$ is \lfp if and only if it is equivalent to the category
  $\Mod(\stdtheory)$ for some essentially algebraic theory $\stdtheory$.
\end{theoremapp}

\begin{example}
  The category~$\Set$ is essentially algebraic since it is the category of
  models of the essentially algebraic
  theory~$(\set \csts,\emptyset,\emptyset,\emptyset,\emptyset)$.
\end{example}
\begin{example}
  \label{ex:mon-ess-alg-theory}
  The category~$\Mon$ of monoids and monoid morphisms is the category of models
  of the essentially algebraic theory
  \[
    \stdtheory^{\mathrm{mon}} = (\set{\csts},\set{\cste \co \termobj \to {\csts},\cstm \co {{\csts}} \times {{\csts}} \to {\csts}},E,\set{\cste,\cstm},\emptyset)
  \]
  where~$E$ consists of three equations
  \begin{itemize}
  \item $\cstm (\cste,x_1) = x_1$ in the context~$(x_1 \co {\csts})$,
  \item $\cstm (x_1,\cste) = x_1$ in the context~$(x_1 \co {\csts})$,
  \item $\cstm (\cstm(x_1,x_2),x_3) = \cstm(x_1,\cstm(x_2,x_3))$ in the context~$(x_1 \co
    {{\csts}},x_2 \co {{\csts}},x_3 \co {{\csts}})$.
  \end{itemize}
  In particular,~$\Mon$ is locally finitely presentable.
\end{example}
\begin{example}
  \label{ex:cat-ess-alg-theory}
  The category~$\Cat$ of small categories is the category of models of the
  essentially algebraic theory~$\stdtheory^{\mathrm{cat}} =
  (S,\Sigma,E,\Sigma_t,\Def)$ defined as follows. The set~$S$ consists of two
  sorts~$\csto$ and~$\cstm$ corresponding to objects and morphisms, and
  \[
    \Sigma = \set{\cstsrc \co \cstm \to \csto,\; \csttgt \co \cstm \to
      \csto,\;\cstid \co \csto \to \cstm,\; \cstcomp\co \cstm \times \cstm \to
      \cstm}\zbox.
  \]
  Moreover,~$E$ consists of the equations
  \begin{itemize}
  \item $\cstsrc (\cstid (x_1)) = x_1$ and~$\csttgt(\cstid (x_1)) =
    x_1$ in the context~$(x_1 \co \csto)$,
    
  \item $\cstsrc(\cstcomp(x_1,x_2)) = \cstsrc(x_1)$ and~$\csttgt(\cstcomp(x_1,x_2)) =
    \csttgt(x_2)$ in the context~$(x_1 \co \cstm,x_2 \co \cstm)$,
    
  \item $\cstcomp(\cstid(\cstsrc(x_1)),x_1) = x_1$
    and~$\cstcomp(x_1,\cstid(\csttgt(x_1))) = x_1$ in the context~$(x_1 \co
    \cstm)$,
    
  \item $\cstcomp(\cstcomp(x_1,x_2),x_3) = \cstcomp(x_1,\cstcomp(x_2,x_3))$ in the
    context~$(x_1 \co \cstm,x_2 \co \cstm,x_3 \co \cstm)$.
  \end{itemize}
  Finally,~$\Sigma_t = \set{\cstsrc,\csttgt,\cstid}$, and~$\Def(\cstcomp)$ is
  the singleton set containing the equation~$\csttgt(x_1) = \cstsrc(x_2)$. This
  shows that~$\Cat$ is a locally finitely presentable category.
\end{example}

\subsection{Details about the implementation of the operations}
\label{sec:details-implementation}

Here, we give additional details about the implementation of the operations that
were briefly discussed in \Cref{sec:computing-operations-brief}, like the
computation of finite colimits of finite presheaves, of left Kan extensions and
of the operations of the game of reflection; the code for the latter operations
can also be used to compute the reflection using the exhaustive strategy.

\subsubsection{Computing colimits}

One of the main operations that we will need to compute in our methods is the
one of colimit of presheaves. Indeed, the Kan extension we need to build the
functor $F'$ from a Kan model $F \co C \to \ps D$ can be expressed as a
particular colimit, and the moves made in the game of reflection are computed
using pushouts and coequalisers.

We will only consider the more elementary computation of colimits of finite
sets, since a method for computing colimits of sets readily extend to
presheaves, because colimits of presheaves are pointwisely computed.

First, remember that any colimit in a cocomplete category can be computed as a
coequaliser:
\begin{proposition}
  \label{prop:colimit-as-coequalizer}
  Given a category $\cC$ with coproducts, a small category $I$ and a diagram $d
  \co I \to \cC$, the colimit $\colim_i d(i) \in \cC$, when it exists, can be
  computed as the coequaliser
  \[
    \begin{tikzcd}[csbo=6em]
      \coprod_{f \co i \to i' \in I} d(i)
      \ar[r,shift left,"{[\iota_{i} \circ \unit{d(i)}]_{f \co i \to i' \in I}}"]
      \ar[r,shift right,"{[\iota_{i} \circ f]_{f \co i \to i' \in I}}"']
      &[6em]
      \coprod_{i \in I} d(i)
      \ar[r,"q",dashed]
      &
      L
    \end{tikzcd}
  \]
  where the $\iota_i \co d(i) \to\coprod_{i \in I} d(i)$'s are the coprojections
  and $[-]_{f \co i \to i'\in I}$ is the copairing operation for the coproduct
  $\coprod_{f \co i \to i' \in I} d(i)$. The coprojections $p_i \co d(i) \to
  \colim_i d(i)$ are then, up to the isomorphism $\colim_i d(i) \cong L$, the
  morphisms $q \circ \iota_i$.
\end{proposition}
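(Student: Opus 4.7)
The plan is to verify the universal property of the colimit directly, using only the universal properties of the coproducts and of the coequaliser.

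First, I would check that the family $(q \circ \iota_i)_{i \in I}$ defines a cocone on the diagram $d$. For every morphism $f \co i \to i'$ in $I$, the defining identity of the coequaliser, applied after precomposing with the coprojection $\iota_f \co d(i) \to \coprod_{f' \co j \to j'} d(j)$ associated with $f$, yields $q \circ \iota_{i'} \circ d(f) = q \circ \iota_i$ (reading the second parallel arrow, as intended, as the one induced by the $d(f)$'s). This is exactly the cocone axiom.

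Next, I would prove the universal property. Given any other cocone $(c_i \co d(i) \to X)_{i \in I}$ on $d$, the copairing operation provides a unique morphism $[c_i]_{i \in I} \co \coprod_{i \in I} d(i) \to X$ satisfying $[c_i]_{i \in I} \circ \iota_i = c_i$. Because $(c_i)_i$ is a cocone, the two composites of $[c_i]_{i \in I}$ with the two parallel maps agree on each coproduct summand, so by uniqueness of the copairing they are equal. Hence $[c_i]_{i \in I}$ coequalises the parallel pair, and the universal property of $q$ provides a unique $u \co L \to X$ with $u \circ q = [c_i]_{i \in I}$, which gives $u \circ (q \circ \iota_i) = c_i$ for every $i \in I$.

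For uniqueness of $u$ as a cocone morphism, I would observe that any $u' \co L \to X$ satisfying $u' \circ q \circ \iota_i = c_i$ for all $i$ would, again by uniqueness of the copairing, satisfy $u' \circ q = [c_i]_{i \in I} = u \circ q$; since $q$ is a coequaliser and therefore an epimorphism, $u = u'$. This establishes that $(L, (q \circ \iota_i)_i)$ has the universal property of $\colim_i d(i)$, giving the canonical isomorphism $L \cong \colim_i d(i)$ under which $q \circ \iota_i$ corresponds to $p_i$.

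There is no genuine obstacle here; the only delicate point is keeping the notation for the two parallel maps straight (in particular, reading the bottom arrow as the one that uses the action $d(f)$ before the coprojection $\iota_{i'}$). Everything else is a routine diagram chase using the universal properties of the coproducts and of the coequaliser.
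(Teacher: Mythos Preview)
Your argument is correct and is the standard verification of this well-known fact. The paper itself does not supply a proof of this proposition: it is stated as a recalled result (introduced with ``First, remember that\ldots'') and used without further justification. Your direct check of the universal property, via the cocone condition, existence of the mediating map through the coequaliser, and uniqueness via the epimorphicity of $q$, is exactly the expected argument. You also correctly identified and resolved the notational slip in the bottom parallel arrow, which should indeed be read as $[\iota_{i'} \circ d(f)]_{f \co i \to i'}$.
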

Computing the coproduct $S$ of a finite list $S_1,\ldots,S_n$ of sets is easy:
given the list $\mathtt{[s_1;\ldots;s_n]}$ of \texttt{set}s that encode these
sets, we just construct a term \texttt{s} of type \texttt{set} that encode a set
$S$, starting from an empty set \texttt{set}, and adding fresh copies of the
elements of $\mathtt{s_1},\ldots,\mathtt{s_n}$. In the process, we also
construct a sequence of maps $\mathtt{[m_1;\ldots;m_n]}$ that encode the
coprojections $S_i \to S$, by remembering, for each element of $\mathtt{s_i}$,
which fresh variable we took in $\mathtt{s}$ as a copy.

Now, computing a coequaliser
$\begin{tikzcd}
  S
  \ar[r,"f",shift left]
  \ar[r,"g"',shift right]
  &
  T
  \ar[r,dashed,"h"]
  &
  L
\end{tikzcd}$ of finite sets $S$ and $T$ is also easy. Indeed, remember that,
in this situation, $L$ can be defined as the quotient $T_{/\sim}$ of $T$ by
the smallest equivalence relation $\sim$ satisfying $f(s) \sim g(s)$ for every
$s \in S$. In order to compute $T_{/\sim}$, it is enough to compute the
equivalence classes of $T$ for $\sim$. But, given encodings $\mathtt{s}$,
$\mathtt{t}$, $\mathtt{f}$ and $\mathtt{g}$ for $S$, $T$, $f$ and $g$
respectively, this can be easily done using a standard Union–Find algorithm.
An encoding of the projection $T \to T_{/\sim}$ can then be easily computed.

Thus, we are able to compute all colimit of finite sets over a finite diagram
$I$. Since colimits on presheaves are pointwisely computed, this procedure can
be extended to compute the colimits over finite diagrams $I$ of finite
presheaves $X \in \ps C$ for a finite category $C$.

\subsubsection{Computing left Kan extensions}

Given finite categories $C,D$, and a functor $F \co C \to \ps D$ such that
$F(c)$ is finite for every $c \in C$, the functor $F$ can be encoded as a pair
of \texttt{Map}s: one mapping the identifiers of the objects $c$ of $C$ to the
encodings of the presheaves $F(c)$, and one mapping the identifiers of the maps
$f \co c \to c'$ of $C$ to encodings of the morphisms $F(f) \co F(c) \to F(c')$.

Given an encoding of such an $F$ and an encoding of a finite presheaf $X \in \ps
C$, by unfolding \Cref{def:kan-extension}, one can compute the image $\Lan F(X)$
of $X$ by $\Lan F$. Indeed, it can be computed as the coequaliser of
\[
  \begin{tikzcd}[csbo=11em]
    \coprod\limits_{\mathmakebox[18pt][c]{f \co c \to c' \in C}} (F(c) \otimes X(c'))
    \ar[r,shift left,"{[\iota_{c'} \circ (F(f) \otimes X(c'))]_{f : c \to c' \in C}}"]
    \ar[r,shift right,"{[\iota_{c} \circ (F(c) \otimes X(f))]_{f : c \to c' \in C}}"']
    &[6em]
    \coprod\limits_{c \in C} F(c) \otimes X(c)
  \end{tikzcd}
\]
which can be computed given the encodings of $F$ and $X$: one first compute the
encodings of the $F(c) \otimes X(c) \cong \coprod_{x \in X(c)} F(c)$ for $c \in
C$, and the full coproduct $\coprod_{c \in C} F(c) \otimes X(c)$ using the
discussed method for computing coproducts. And then, one compute the coequaliser
using the extension to presheaves of the method for sets based on the Union–Find
algorithm. Thus, we obtain an encoding of $\Lan F(X)$, and, with additional
administrative work, encodings of the coprojections $F(c) \otimes X(c) \to \Lan
F(X)$ for $c \in C$.

Now, given a morphism $f \co X \to Y$ between two finite presheaves $X$ and $Y$,
we compute the image of $\Lan F(f)$ by observing that we have the commutative
diagram
\[
  \begin{tikzcd}
    F(c) \otimes X(c)
    \ar[r,"{F(c) \otimes f_c}"]
    \ar[d,"p^c"']
    &
    F(c) \otimes Y(c)
    \ar[d,"q^c"]
    \\
    \Lan F(X)
    \ar[r,"\Lan F(f)"']
    &
    \Lan F(Y)
  \end{tikzcd}
\]
for every $c \in \Ob(C)$, where $p^c$ and $q^c$ are the canonical coprojections.
Since, as shown above, $\Lan F(X)$ is computed as a coequaliser, the elements of
the presheaf $\Lan F(X)$ are all image by $p^c$ of some element of $F(c) \otimes
X(c)$ for some $c \in \Ob(C)$. Moreover, if two elements of $F(c) \otimes X(c)$
and $F(c') \otimes X(c')$ respectively, for some $c,c' \in \Ob(C)$, are sent by
$p^c$ and $p^{c'}$ to the same element of $\Lan F(X)$, their images by $q^c
\circ (F(c) \otimes f^c)$ and $q^{c'} \circ (F(c') \otimes f^{c'})$,
respectively, will be sent to the same element of $\Lan F(Y)$. Thus, by
iterating over all the elements of all the $F(c) \otimes X(c)$ presheaves for $c
\in \Ob(C)$, we are able to compute an encoding of $\Lan F(f)$.

Thus, when $F$ is a Kan model between two presheaf models $(C,O)$ and $(D,P)$,
we are able to compute the action of $F' \co \ps C \to \ps D$ on particular
presheaves and morphisms of presheaves of $\ps C$.

\subsubsection{Computing reflection}

Given a finite presheaf model $(C,O)$, we must now consider the computation of
the moves of the game of reflection.

First, we might want to know what are the available moves of a certain type,
starting from a morphism $m \co X \to Y \in \ps C$ seen as a configuration of
the game. Consider the Dom-E moves relative to some orthogonality condition $g
\co A \to B \in \ps C$, for example. In order to know all the Dom-E moves
involving $g$, we must:
\begin{enumerate}[1)]
\item compute all the morphisms $f \co A \to X$
\item for each such $f \co A \to X$, compute all the $h \co B \to Y$ such that
  $h\circ g = m \circ f$
\item return the pairs $(f,h)$
\end{enumerate}
Note that one can compute all the functions between two finite sets $S$ and
$S'$. By extension, one can compute all the morphisms between two given finite
presheaves $Z$ and $Z'$ of $\ps C$: one just need to compute all the possible
families of functions $(k_c \co Z(c) \to Z(c))_{c \in \Ob(C)}$ and then filter
out the ones that do not satisfy the naturality condition of natural
transformations $Z \To Z'$, which is a computable condition on such family of
morphisms, based on the encodings of $Z$ and $Z'$. Thus, the above procedure can
be computed. Similarly, one can compute all the instances of the Dom-U,
Cod-E and Cod-U moves.

Once one knows all the possible moves of a certain type relative to a certain $g
\co A \to B \in O$, one might want to compute the result of applying the move.
We will again consider for example an instance $(f,h)$ of a Dom-E move, from
the list computed above. One must first compute the pushout
\eqref{eq:dom-moves-pushout}, which can be done, since every finite colimit of
finite presheaves can be computed as we explained earlier.

Once that such a pushout is computed, one must compute the factorisation $m' \co
X' \to Y$ of the cocone $(m,h)$. Since every element of $X'$ is the image of
either one element of $X$ or one element of $B$, the encoding of $m'$ can be
computed by iterating over the elements of $X$ and $B$, computing their images
in $X'$ and $Y$, and then adding the computed bindings to the definition of
$m'$. This way, we were able to compute the result $m'$ of a particular instance
$(f,h)$ of a Dom-E move. Computing the results of the other moves can be
similarly achieved.

These methods allow one to compute the result of moves one by one, but one can
adapt the procedure to compute the result of all the available moves at once.
This can be done by computing a colimit on a diagram similar
to~\eqref{eq:step-Xi-diag}.


\end{document}